\documentclass[12pt,oneside]{article}
\usepackage[inner=32mm,outer=31mm,tmargin=30mm,bmargin=40mm]{geometry}
\usepackage{hyperref}
\usepackage{sectsty}
    \sectionfont{\large}
    \subsectionfont{\normalsize}
\usepackage[utf8]{inputenc}
\usepackage[T1]{fontenc}
\usepackage{amssymb}
\usepackage{geometry}
\usepackage{amsmath}
\usepackage{amsthm}
\usepackage{nicefrac}
\usepackage{xfrac}
\usepackage{graphicx}
\usepackage{color}
\usepackage[normalem]{ulem}
\usepackage[english]{babel}
\usepackage{array}
\geometry{a4paper}
\usepackage[onehalfspacing]{setspace}
\usepackage{verbatim}

\usepackage{rotating}

\numberwithin{equation}{section}

\newcommand\Item[1][]{%
  \ifx\relax#1\relax  \item \else \item[#1] \fi
  \abovedisplayskip=0pt\abovedisplayshortskip=0pt~\vspace*{-\baselineskip}}

\newcommand{\IR}{\ensuremath{\mathbb{R}}}
\newcommand{\IN}{\ensuremath{\mathbb{N}}}
\newcommand{\IZ}{\ensuremath{\mathbb{Z}}}

\newcommand{\IC}{\ensuremath{\mathbb{C}}}
\newcommand{\IK}{\ensuremath{\mathbb{K}}}

\newcommand{\norm}[1]{\left\Vert#1\right\Vert}

\newcommand{\set}[1]{\left\{#1\right\}}
\newcommand{\abs}[1]{\left|#1\right|}
\newcommand{\brackets}[1]{\left(#1\right)}
\renewcommand{\d}{{\rm d}} 
\newcommand{\scalar}[2]{\left\langle#1,#2\right\rangle}

\DeclareMathOperator{\vspan}{span}
\newcommand{\diff}{D}

\DeclareMathOperator{\rank}{rank}

\newtheorem{thm}{Theorem}

\newtheorem{cor}{Corollary}
\theoremstyle{plain}
\newtheorem{lemma}{Lemma}

\theoremstyle{definition}
\newtheorem{ex}{Example}

\title{Tensor power sequences and the approximation of tensor product operators} 

\author{David Krieg (david.krieg@uni-jena.de)}

\date{\today}

\begin{document}

\maketitle

\begin{abstract}
The approximation numbers of the $L_2$-embedding of
mixed order Sobolev functions on the $d$-torus are well studied.
They are given as the nonincreasing rearrangement of the $d$th tensor power of
the approximation number sequence in the univariate case.
I present results on the asymptotic and preasymptotic behavior for tensor powers
of arbitrary sequences of polynomial decay.
This can be used to study the approximation numbers of many other tensor product operators,
like the embedding of mixed order Sobolev functions on the $d$-cube into $L_2\brackets{[0,1]^d}$
or the embedding of mixed order Jacobi functions on the $d$-cube into $L_2\brackets{[0,1]^d,w_d}$ with Jacobi weight $w_d$.
\end{abstract}

\section{Introduction and Results}

Let $\sigma: \IN \to \IR$ be a nonincreasing zero sequence.
For any natural number $d$, its $d$th tensor power is the sequence $\sigma_d: \IN^d \to \IR$,
where
\begin{equation}
 \sigma_d(n_1,\dots,n_d)=\prod_{j=1}^d\sigma(n_j).
\end{equation}
Any such sequence $\sigma_d$ can then be uniquely rearranged to a nonincreasing zero sequence $\tau:\IN\to\IR$.
Tensor power sequences like this occur naturally in the study of approximation numbers of tensor power operators.
If $\sigma$ is the sequence of approximation numbers of a compact operator between two Hilbert spaces,
then $\tau$ is the sequence of approximation numbers of the compact $d$th tensor power operator
between the tensor power spaces.

What can we say about the behavior of $\tau$
based on the behavior of $\sigma$?
A classical result of Babenko \cite{babenko} and Mityagin \cite{mityagin} is concerned with
the speed of decay of these sequences:

\begin{thm}
\label{babenko mityagin theorem}
 Let $\sigma$ be a nonincreasing zero sequence and 
 $\tau$ be the nonincreasing rearrangement of its $d$th tensor power.
 For any $s>0$, the following holds.
 \begin{itemize}
  \item[(i)] If $\sigma(n) \preccurlyeq n^{-s}$, then $\tau(n) \preccurlyeq  n^{-s}\brackets{\log n}^{s(d-1)}$.
  \item[(ii)] If $\sigma(n) \succcurlyeq n^{-s}$, then $\tau(n) \succcurlyeq  n^{-s}\brackets{\log n}^{s(d-1)}$. 
 \end{itemize}
\end{thm}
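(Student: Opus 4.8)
The plan is to control the counting function $N(\varepsilon)=\#\set{\mathbf{n}\in\IN^d:\sigma_d(\mathbf{n})>\varepsilon}$, which is finite for each $\varepsilon>0$ (from $\sigma_d(\mathbf{n})>\varepsilon$ one gets $\sigma(n_j)>\varepsilon/\sigma(1)^{d-1}$, bounding each $n_j$), and then to read $\tau$ off from it via the rearrangement identity $\tau(n)>\varepsilon\iff n\le N(\varepsilon)$: an upper bound $N(\varepsilon)<n$ yields $\tau(n)\le\varepsilon$, a lower bound $N(\varepsilon)\ge n$ yields $\tau(n)>\varepsilon$. At the outset I would dispose of the degenerate case by assuming $\sigma(n)>0$ for all $n$; otherwise $\sigma$, being nonincreasing, and hence $\tau$ are eventually zero, so (i) is trivial and (ii) is vacuous.

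Next I would reduce to a lattice-point count. Writing $D_d(M)=\#\set{\mathbf{n}\in\IN^d:\prod_{j=1}^d n_j\le M}$, the hypothesis $\sigma(n)\le Cn^{-s}$ of (i) gives $\sigma_d(\mathbf{n})\le C^d(\prod_j n_j)^{-s}$, so $\sigma_d(\mathbf{n})>\varepsilon$ forces $\prod_j n_j<(C^d/\varepsilon)^{1/s}$, whence $N(\varepsilon)\le D_d((C^d/\varepsilon)^{1/s})$; dually, $\sigma(n)\ge cn^{-s}$ of (ii) gives $\sigma_d(\mathbf{n})\ge c^d(\prod_j n_j)^{-s}$, so $\prod_j n_j\le\tfrac12(c^d/\varepsilon)^{1/s}$ forces $\sigma_d(\mathbf{n})\ge 2^s\varepsilon>\varepsilon$, whence $N(\varepsilon)\ge D_d(\tfrac12(c^d/\varepsilon)^{1/s})$. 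The whole theorem then follows from the elementary two-sided bound
\[
  c_d\,M(\log M)^{d-1}\;\le\;D_d(M)\;\le\;M(1+\log M)^{d-1},\qquad M\ge4,
\]
with $c_d>0$ depending only on $d$: the upper bound by summing $\lfloor M/(n_1\cdots n_{d-1})\rfloor$ over all tuples with $n_1\cdots n_{d-1}\le M$ and then discarding that constraint to decouple into $d-1$ partial harmonic sums, the lower bound by restricting the same sum first to $n_1\cdots n_{d-1}\le\sqrt M$ (so each floor is $\ge\tfrac12 M/(n_1\cdots n_{d-1})$) and then to $n_j\le M^{1/(2(d-1))}$ for every $j$.

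Finally I would invert. For (i) I take $\varepsilon_n=c_3\,n^{-s}(\log n)^{s(d-1)}$; then $(C^d/\varepsilon_n)^{1/s}=(C^d/c_3)^{1/s}n(\log n)^{-(d-1)}$ and $1+\log((C^d/\varepsilon_n)^{1/s})\le2\log n$ for large $n$, so $N(\varepsilon_n)\le(C^d/c_3)^{1/s}2^{d-1}n$, and a large enough $c_3=c_3(s,d,C)$ forces $N(\varepsilon_n)<n$, i.e.\ $\tau(n)\le\varepsilon_n$, for all large $n$ (absorbing the finitely many small $n$ into $c_3$). For (ii) I take $\varepsilon_n=c_4\,n^{-s}(\log n)^{s(d-1)}$; then $M:=\tfrac12(c^d/\varepsilon_n)^{1/s}=\tfrac12 c^{d/s}c_4^{-1/s}n(\log n)^{-(d-1)}$ with $\log M\ge\tfrac{s}{2}\log n$ for large $n$, so $N(\varepsilon_n)\ge c_d\,c^{d/s}2^{-d}c_4^{-1/s}n$, and a small enough $c_4=c_4(s,d,c)$ forces $N(\varepsilon_n)\ge n$, i.e.\ $\tau(n)>\varepsilon_n$, for all large $n$ (the rest handled by shrinking $c_4$, which is legitimate since $\tau(n)>0$ everywhere under the hypothesis of (ii)).

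The hard part will be the lattice-point estimate $D_d(M)\asymp M(\log M)^{d-1}$ — the higher-dimensional Dirichlet divisor count — together with the careful bookkeeping of the secondary factor $(\log\log n)^{d-1}$ that surfaces on inverting; the rearrangement identity and the reduction to the products $\prod_j n_j$ are routine, so I expect the only genuine effort to be in tracking the constants depending on $s$ and $d$.
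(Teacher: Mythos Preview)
Your argument is correct and follows the same core strategy as the paper: reduce the size of $\tau(n)$ to a hyperbolic lattice-point count and use the asymptotic $D_d(M)\asymp M(\log M)^{d-1}$, then invert. The paper does not prove Theorem~\ref{babenko mityagin theorem} separately; it proves the sharper Theorem~\ref{asymptotic theorem} and lets Theorem~\ref{babenko mityagin theorem} fall out. The one structural difference is that the paper, in order to pin down the exact constant $c^d/(d-1)!^s$, works with the shifted counts $A_N(r,l)=\#\{\mathbf n\in\{N,N+1,\dots\}^l:\prod n_j\le r\}$ and decomposes $\{\sigma_d(\mathbf n)\ge\tau(n)\}$ according to how many coordinates exceed the threshold $N$ where the asymptotic $\sigma(n)\sim cn^{-s}$ kicks in (see Lemma~\ref{cardinalityasymptotics} and the proof of Theorem~\ref{asymptotic theorem}). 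Your direct use of $D_d=A_1$ without this decomposition is simpler and perfectly adequate for the order statement in Theorem~\ref{babenko mityagin theorem}, but would not yield the constants.

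One small slip: in part (ii) you write $\log M\ge\tfrac{s}{2}\log n$, which fails for $s>2$ since $\log M/\log n\to 1$; you mean $\log M\ge\tfrac12\log n$, and indeed your subsequent constant $c_d\,c^{d/s}2^{-d}c_4^{-1/s}$ is exactly what comes from $(\tfrac12\log n)^{d-1}$, so the rest of the computation is unaffected.
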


Here, the symbol $\preccurlyeq$ (respectively $\succcurlyeq$) means that the
left (right) hand side is bounded above by a constant multiple
of the right (left) hand side for all $n\in\IN$.
Of course, other decay assumptions on $\sigma$ may be of interest.
For instance, Pietsch \cite{pietsch} and König \cite{koenig} study the decay of $\tau$,
if $\sigma$ lies in the Lorentz sequence space $\ell_{p,q}$ for positive indices $p$ and $q$,
which is a stronger assumption than $(i)$ for $s=1/p$ but weaker than $(i)$ for any $s>1/p$.
However, since we are motivated by the example of Sobolev embeddings,
we will stick to the assumptions of Theorem~\ref{babenko mityagin theorem}.
One of the problems with this theorem is that
it does not provide explicit estimates for $\tau(n)$, even if $n$ is huge.
This is because of the constants hidden in the notation.
But Theorem~\ref{babenko mityagin theorem} can be sharpened.

\begin{thm}
\label{asymptotic theorem}
 Let $\sigma$ be a nonincreasing zero sequence and
 $\tau$ be the nonincreasing rearrangement of its $d$th tensor power.
 For $c>0$ and $s>0$, the following holds.
 \begin{itemize}
  \item[(i)] If $\sigma(n) \lesssim c\, n^{-s}$, then $\tau(n) \lesssim \frac{c^d}{{(d-1)!}^s}\, n^{-s}\brackets{\log n}^{s(d-1)}$.
  \item[(ii)] If $\sigma(n) \gtrsim c\, n^{-s}$, then $\tau(n) \gtrsim \frac{c^d}{{(d-1)!}^s}\, n^{-s}\brackets{\log n}^{s(d-1)}$. 
 \end{itemize}
\end{thm}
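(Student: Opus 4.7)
The plan is to analyze the counting function $F(t) = \#\set{k \in \IN^d : \sigma_d(k) > t}$, which is related to $\tau$ by the identity $\tau(n) \leq t$ whenever $F(t) < n$. Since the hypothesis $\sigma(n) \lesssim c\, n^{-s}$ means that for every $\varepsilon > 0$ there is some $N_\varepsilon$ with $\sigma(n) \leq (c+\varepsilon)\, n^{-s}$ for $n \geq N_\varepsilon$, splitting the tensor product coordinates into ``small'' (less than $N_\varepsilon$) and ``large'' yields
$$F(t) \leq \#\set{k \in \IN^d : \prod_{j=1}^d k_j < R_\varepsilon(t)} + \mathcal{E}(t),\qquad R_\varepsilon(t) = \brackets{(c+\varepsilon)^d/t}^{1/s},$$
where $\mathcal{E}(t)$ collects tuples with at least one small coordinate and is controlled by a constant multiple of a lower-dimensional hyperbolic count, contributing only a $(\log n)^{s(d-2)}$-type correction after the final inversion.

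The key ingredient is the classical hyperbolic lattice asymptotic
$$\#\set{k \in \IN^d : \prod_{j=1}^d k_j \leq R} = \frac{R(\log R)^{d-1}}{(d-1)!}\,\brackets{1 + o(1)},\qquad R\to\infty,$$
which I would establish by induction on $d$ via Dirichlet's hyperbola method, or equivalently by summing $\sum_{k \leq R}(\log(R/k))^{d-2}/(d-2)!$ and comparing with its integral. Combined with the previous step, this gives $F(t) \leq R_\varepsilon(t)(\log R_\varepsilon(t))^{d-1}(1+o(1))/(d-1)!$ as $t \to 0^+$.

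To read off the bound on $\tau(n)$, I would invert: set $r_n = (1-\delta)(d-1)!\, n/(\log n)^{d-1}$ for a small $\delta > 0$, and check that $r_n(\log r_n)^{d-1}/(d-1)! < n$ for all sufficiently large $n$, using $\log r_n = \log n \cdot (1+o(1))$. Then $t_n := (c+\varepsilon)^d r_n^{-s}$ satisfies $F(t_n) < n$, so that
$$\tau(n) \leq t_n = \frac{(c+\varepsilon)^d}{((d-1)!)^s}\cdot\frac{(\log n)^{s(d-1)}}{n^s}\cdot (1-\delta)^{-s}.$$
Sending first $\delta \to 0$ and then $\varepsilon \to 0$ establishes (i); part (ii) is entirely analogous, with $\sigma(n) \gtrsim c n^{-s}$ producing a matching lower bound on $F$ (after restricting to indices $\geq N_\varepsilon$) and the same inversion carried out in the opposite direction.

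The main obstacle is keeping the two nested asymptotic inversions sharp enough to preserve the precise constant $1/((d-1)!)^s$: the hyperbolic lattice count must be known with a $(1+o(1))$ error rather than merely up to a multiplicative constant (which was all Babenko--Mityagin needed), and the inversion of $R(\log R)^{d-1} \sim (d-1)!\, n$ must be shown to introduce only $o(1)$ relative error in $R$, so that the $\log\log n$ corrections hidden inside $(\log r_n)^{d-1}$ are genuinely absorbed. Simultaneously, one must verify that the boundary contribution $\mathcal{E}(t)$ from small coordinates is of strictly lower asymptotic order in $n$, so that the truncation parameter $\varepsilon$ can safely be sent to zero at the very end.
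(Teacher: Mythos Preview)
Your approach is correct and essentially identical to the paper's: both bound $n$ via the counting function $\#\{\boldsymbol{n} : \sigma_d(\boldsymbol{n}) \geq \tau(n)\}$, split according to which coordinates exceed the threshold $N_\varepsilon$, invoke the hyperbolic-cross asymptotic $A_N(r,d) \sim r(\log r)^{d-1}/(d-1)!$ (the paper's Lemma~\ref{cardinalityasymptotics}, proved by exactly the induction you sketch), observe that the terms with fewer than $d$ large coordinates are of lower logarithmic order, and then invert. The only cosmetic differences are that the paper normalizes to $s=1$ and $\sigma(1)=1$ at the outset and carries out the inversion via the eventual monotonicity of $n/(\log n)^{d-1}$ rather than through your explicit choice of $r_n$; note also that for part~(ii) your restriction to indices $\geq N_\varepsilon$ means you need the asymptotic for $A_{N_\varepsilon}$ rather than $A_1$, which is precisely why the paper states its lemma for general $N$.
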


We write $f(n) \lesssim g(n)$ for positive sequences $f$ and $g$
and say that $f(n)$ is asymptotically smaller or equal than $g(n)$,
if the limit superior of $f(n)/g(n)$ is at most one as $n$ tends to infinity.
Analogously, $f(n)$ is asymptotically greater than or equal to $g(n)$, write $f(n)\gtrsim g(n)$,
if the limit inferior of this ratio is at least one.
Finally, we say $f(n)$ is asymptotically equal to $g(n)$ and write $f(n)\simeq g(n)$ if the limit of the ratio equals one.
In particular, we obtain that $\sigma(n) \simeq c\, n^{-s}$ implies that
$\tau(n) \simeq \frac{c^d}{{(d-1)!}^s}\, n^{-s}\brackets{\log n}^{s(d-1)}$.
Theorem~\ref{asymptotic theorem} is due to Theorem~4.3 in \cite{ksu}. There, Kühn, Sickel and Ullrich prove
this asymptotic equality in an interesting special case:
$\tau$ is the sequence of approximation numbers for the $L_2$-embedding of the tensor power space
$H^{s}_{\rm mix}\brackets{\mathbb{T}^d}$ on the $d$-torus $[0,2\pi]^d$,
equipped with a tensor product norm.
The statement can be deduced from this special case with the help of their Lemma~4.14.
However, we prefer to give a direct proof in Section~\ref{asymptoticssection} by generalizing
the proof of Theorem~4.3 in \cite{ksu}.

Theorem \ref{asymptotic theorem} gives us a pretty good understanding of the asymptotic behavior of the $d$th tensor power
$\tau$ of a sequence $\sigma$ of polynomial decay.
If $\sigma(n)$ is roughly $c\, n^{-s}$ for large $n$,
then $\tau(n)$ is roughly $c^d\brackets{\frac{\brackets{\log n}^{d-1}}{\brackets{d-1}!}}^s n^{-s}$
for $n$ larger than a certain threshold.
But even for modest values of $d$, the size of this threshold may
go far beyond the scope of computational capabilities.
Indeed, while $\tau$ decreases, the function $n^{-s}\brackets{\log n}^{s(d-1)}$ grows rapidly as $n$ goes from 1 to $e^{d-1}$.
For $n^{-s}\brackets{\log n}^{s(d-1)}$ to become less than one, $n$ even has to be super exponentially large in $d$.
Thus, any estimate for the sequence $\tau$ in terms of $n^{-s}\brackets{\log n}^{s(d-1)}$ is useless
to describe its behavior in the range $n\leq 2^d$, its so called preasymptotic behavior.
As a replacement, we will prove the following estimate in Section~$\ref{preasymptoticssection}$.

\begin{thm}
\label{preasymptoticstheorem1}
Let $\sigma$ be a nonincreasing zero sequence
and $\tau$ be the nonincreasing rearrangement of its $d$th tensor power.
Let $\sigma(1)>\sigma(2)>0$ and assume that $\sigma(n) \leq C\, n^{-s}$ for some $s,C>0$ and all $n\geq 2$.
For any $n\in\set{2,\hdots,2^d}$,
\begin{equation*}
 \frac{\sigma(2)}{\sigma(1)} \cdot \brackets{\frac{1}{n}}^{\frac{\log \brackets{\sigma(1)/\sigma(2)}}{\log\brackets{1 + \frac{d}{\log_{2} n}}}}
 \leq \frac{\tau(n)}{\tau(1)} \leq
 \brackets{\frac{\exp\brackets{\brackets{C/\sigma(1)}^{2/s}}}{n}}^{\frac{\log \brackets{\sigma(1)/\sigma(2)}}{\log\brackets{\brackets{\sigma(1)/\sigma(2)}^{2/s}\, d}}}
.\end{equation*}
\end{thm}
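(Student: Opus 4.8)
The plan is to translate both inequalities into statements about the counting function
\[
 N(t)\;=\;\#\bigl\{(n_1,\dots,n_d)\in\IN^d:\sigma(n_1)\cdots\sigma(n_d)\ge t\bigr\},\qquad t>0 .
\]
Since $\tau$ is the nonincreasing rearrangement of the tensor power $\sigma_d$, the set $\{k:\tau(k)\ge t\}$ is the initial segment $\{1,\dots,N(t)\}$, so $\tau(n)\ge t\iff N(t)\ge n$. Hence, with $\tau(1)=\sigma(1)^d$, for the lower bound I need a large $t$ with $N(t)\ge n$ and for the upper bound a small $t$ with $N(t)<n$. It is convenient to work with $u=t/\tau(1)$, so that the defining condition reads $\prod_j(\sigma(n_j)/\sigma(1))\ge u$, and to write $b=\log(\sigma(1)/\sigma(2))>0$, $\mu=(C/\sigma(1))^{2/s}$, $\gamma=(\sigma(1)/\sigma(2))^{2/s}$ (so $\log\gamma=2b/s$ and $\gamma d>1$).

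\textbf{Lower bound.} Here I would use only the multi-indices with all entries in $\{1,2\}$: one with exactly $m$ twos has $\sigma(n_1)\cdots\sigma(n_d)/\tau(1)=(\sigma(2)/\sigma(1))^m$, and there are $\binom dm$ of them. So if $M$ denotes the smallest integer with $\sum_{m=0}^M\binom dm\ge n$ (it exists and $M\le d$, since $n\le2^d$), then $N\bigl((\sigma(2)/\sigma(1))^M\tau(1)\bigr)\ge\sum_{m=0}^M\binom dm\ge n$, hence $\tau(n)/\tau(1)\ge(\sigma(2)/\sigma(1))^M$. It remains to bound $M$, and the key elementary inequality is
\[
 \sum_{m=0}^M\binom dm\ \ge\ \Bigl(1+\tfrac dM\Bigr)^M\qquad(1\le M\le d),
\]
which follows by checking $\binom Mm(d/M)^m\le\binom dm$ term by term (the factors $d-di/M$ are dominated by $d-i$ because $M\le d$). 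Thus every integer $M'$ with $M'\log(1+d/M')\ge\log n$ already satisfies $\sum_{m=0}^{M'}\binom dm\ge n$; since $x\mapsto x\log(1+d/x)$ is increasing, this gives $M\le\lceil M^*\rceil$, where $M^*$ is the positive solution of $x\log(1+d/x)=\log n$. Finally $\log_2 n\le d$ forces $\log n/\log(1+d/\log_2 n)\le\log_2 n$, whence by the same monotonicity $M^*\le\log n/\log(1+d/\log_2 n)$, so $M\le 1+\log n/\log(1+d/\log_2 n)$. Inserting this exponent into $(\sigma(2)/\sigma(1))^M$ (a number in $(0,1)$ to a power) gives exactly the claimed lower bound.

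\textbf{Upper bound.} I would split $N(t)$ according to the number $m$ of coordinates with $n_j\ge2$. Such a multi-index contributes a factor at most $\sigma(2)^m$ from those coordinates, so it can satisfy the defining inequality only if $m\le\log(1/u)/b=:L'$; and for fixed $m$ the number of admissible indices is $\binom dm$ times the number $Q_m$ of tuples $(l_1,\dots,l_m)\in\{2,3,\dots\}^m$ with $\sigma(l_1)\cdots\sigma(l_m)\ge u\,\sigma(1)^m$. Using $\sigma(l)\le Cl^{-s}$, this forces $\prod_i l_i\le u^{-1/s}(C/\sigma(1))^{m/s}=:Y$, and the elementary counting bound $\#\{(l_i)\in\{2,3,\dots\}^m:\prod_i l_i\le Y\}\le Y^2(\sum_{l\ge2}l^{-2})^m\le Y^2$ gives $Q_m\le u^{-2/s}(C/\sigma(1))^{2m/s}=u^{-2/s}\mu^m$ — the exponent $2/s$ being chosen precisely so that $(C/\sigma(1))^{2m/s}=\mu^m$ and $\sum_{l\ge2}l^{-2}\le1$. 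Summing over $m$ and using $\binom dm\le d^m/m!$, then $d^m\le d^{\lfloor L'\rfloor}$ and $\sum_m\mu^m/m!\le e^\mu$, yields
\[
 N(t)\ \le\ u^{-2/s}\sum_{m=0}^{\lfloor L'\rfloor}\binom dm\mu^m\ \le\ u^{-2/s}\,d^{\lfloor L'\rfloor}\,e^\mu .
\]
Now I would use the bookkeeping identities $u^{-2/s}=\gamma^{L'}$ and, for any $u>u_0:=(e^\mu/n)^{b/\log(\gamma d)}$, the inequalities $L'<L:=\log(n/e^\mu)/\log(\gamma d)$ and $n=e^\mu(\gamma d)^L$, which together give $N(t)\le e^\mu(\gamma d)^{L'}<n$. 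Hence $\tau(n)<u\tau(1)$ for every $u>u_0$, and letting $u\downarrow u_0$ gives $\tau(n)/\tau(1)\le u_0$, the claimed upper bound. (If $\mu\ge\log n$ the bound is trivial, since then $u_0\ge1>\tau(n)/\tau(1)$.)

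\textbf{Main obstacle.} The rearrangement bookkeeping and the term-by-term binomial estimates are routine. The genuinely delicate point is calibrating the upper bound: one must pick the Markov exponent equal to $2/s$ so that all the constants that appear collapse into the single quantities $\mu^m$ and $\gamma^{L'}$, and then line up the floor $\lfloor L'\rfloor$ with the target exponent $\log(\sigma(1)/\sigma(2))/\log(\gamma d)$ so that $\gamma^{L'}$, $d^{\lfloor L'\rfloor}$ and $e^\mu$ multiply out to exactly $n$; replacing ``$\le n$'' by the strict ``$<n$'' by perturbing $u$ is a small but necessary step.
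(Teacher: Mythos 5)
Your argument is correct and is essentially the paper's proof of Theorem~\ref{preasymptoticstheorem} specialized to $\delta=1$ and multiplicity $v=1$: both stratify the count by the number of coordinates $\ge 2$, bound the resulting hyperbolic-cross cardinality $A_2(r,m)$ by $r^2$, collect the $\binom{d}{m}\le d^m/m!$ terms into an $e^\mu$ factor, and use the binomial inequality $\sum_{m\le M}\binom{d}{m}\ge(1+d/M)^M$ for the lower bound. The only cosmetic differences are that you prove $A_2(Y,m)\le Y^2$ by a one-shot Rankin/Markov estimate rather than the paper's induction in Lemma~\ref{hyperboliccrosscountinglemma}, and that you organize the lower bound around the minimal $M$ with $\sum_{m\le M}\binom{d}{m}\ge n$ instead of the paper's level $L$ with $\tau(n)\in[\sigma(2)^{L+1},\sigma(2)^L)$.
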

Let us assume the power (or dimension) $d$ to be large.
Then the tensor power sequence, which roughly decays like $n^{-s}$ for huge values of $n$,
roughly decays like $n^{-t_d}$ with $t_d=\log \brackets{\sigma(1)/\sigma(2)}/ \log d$ for small values of $n$.
This is why I will refer to $t_d$
as preasymptotic rate of the tensor power sequence.
The preasymptotic rate is much worse than the asymptotic rate.
This is not an unusual phenomenon for high-dimensional problems.
Comparable estimates for the case of
$\tau$ being the sequence of approximation numbers of the embedding
$H^{s}_{\rm mix}\brackets{\mathbb{T}^d} \hookrightarrow L_2\brackets{\mathbb{T}^d}$
are established in Theorem 4.9, 4.10, 4.17 and 4.20 of \cite{ksu}.
See \cite{cw}, \cite{ksu2} or \cite{cw2} for other examples.
An interesting consequence of these preasymptotic estimates is the following tractability result.
For each $d\in\IN$, let $T_d$ be a compact norm-one operator between two Hilbert spaces
and let $T_d^d$ be its $d$th tensor power.
Assume that the corresponding approximation numbers $a_n\brackets{T_d}$ are nonincreasing in $d$
and that $a_n\brackets{T_1}$ decays polynomially in $n$.
Then the problem of approximating $T_d^d$ by linear functionals is strongly polynomially tractable,
iff it is polynomially tractable, iff $a_2\brackets{T_d}$ decays polynomially in $d$.

In Section~\ref{applicationssection}, these results will be applied to the
$L_2$-approximation of mixed order Sobolev functions on the $d$-torus, as well as
mixed order Jacobi and Sobolev functions on the $d$-cube,
taking different normalizations into account.
For instance, we will consider the 
$L_2$-embedding
\begin{equation}
 T_s^d: H^s_{\rm mix}\brackets{[0,1]^d} \hookrightarrow L_2\brackets{[0,1]^d}
\end{equation}
of the $d$-variate Sobolev space $H^s_{\rm mix}\brackets{[0,1]^d}$
with dominating mixed smoothness $s\in\IN$, equipped with the
scalar product
\begin{equation}
 \scalar{f}{g}=\sum_{\alpha\in\set{0,\hdots,s}^d} \scalar{\diff^\alpha f}{\diff^\alpha g}_{L_2}.
\end{equation}
Let $\widetilde T_s^d$ be the restriction
of $T_s^d$ to the subspace $H^s_{\rm mix}\brackets{\mathbb{T}^d}$ of periodic functions.
Theorem~\ref{asymptotic theorem} yields that the approximation numbers of these embeddings satisfy
\begin{equation}
 \lim\limits_{n\to\infty} \frac{a_n(T_s^d)\cdot n^s}{\brackets{\log n}^{s(d-1)}}
 = \lim\limits_{n\to\infty} \frac{a_n(\widetilde T_s^d)\cdot n^s}{\brackets{\log n}^{s(d-1)}}
 = \brackets{\pi^d\cdot \brackets{d-1}!}^{-s}.
\end{equation}
In particular, they do not only have the same rate of convergence,
but even the limit of their ratio is one.
This means that the $L_2$-approximation of mixed order Sobolev functions on the $d$-cube
with $n$ linear functionals is just as hard for nonperiodic functions as for periodic functions, if $n$ is large enough.
The preasymptotic rate $\tilde t_d$ for the periodic case satisfies
\begin{equation}
 \frac{s\cdot \log \brackets{2\pi}}{\log d}
 \leq \tilde t_d
 \leq \frac{s\cdot \log \brackets{2\pi}+1}{\log d}.
\end{equation}
Although this is significantly worse than the asymptotic main rate $s$,
it still grows linearly with the smoothness.
An increasing dimension can hence be neutralized by increasing the smoothness of the functions.
In contrast, the preasymptotic rate $t_d$ for the nonperiodic case satisfies
\begin{equation}
 \frac{1.2803}{\log d} \leq t_d 
 \leq \frac{1.2825}{\log d}
\end{equation}
for any $s\geq 2$.
This means that increasing the smoothness of the functions beyond $s=2$ in the nonperiodic setting
is a very ineffective way of reducing the approximation error.
The $L_2$-approximation of mixed order Sobolev functions on the $d$-cube
with less than $2^d$ linear functionals is hence much harder for nonperiodic functions than for periodic functions.
This is also reflected in the corresponding tractability results:
The approximation problem $\{\widetilde T_{s_d}^d\}$
is (strongly) polynomially tractable, iff the smoothness $s_d$ grows at least logarithmically with the dimension,
whereas the approximation problem $\{T_{s_d}^d\}$
is never (strongly) polynomially tractable.
A similar effect for functions with coordinatewise increasing smoothness has already been observed by
Papageorgiou and Woźniakowski in \cite{pw}.
However, the tractability result for the space of periodic functions heavily depends on the
side length $b-a$ of the torus $\mathbb{T}^d=[a,b]^d$.
If it is less than $2\pi$, (strong) polynomial tractability is equivalent to logarithmic increase of the smoothness.
If it equals $2\pi$, (strong) polynomial tractability is equivalent to polynomial increase of the smoothness.
If it is larger than $2\pi$, there cannot be (strong) polynomial tractability.
These tractability results and interpretations can be found in Section~\ref{tracsection}.

\section{Asymptotic Behavior of Tensor Power Sequences}
\label{asymptoticssection}

Let $\sigma$ be a nonincreasing zero sequence
and $\tau$ be the nonincreasing rearrangement of its $d$th tensor power.
Fix some $s>0$ and let us consider the quantities
\begin{align*}
 &C_1=\limsup\limits_{n\to\infty}\, \sigma(n) n^s,
 &&c_1=\liminf\limits_{n\to\infty}\, \sigma(n) n^s,\\
 &C_d=\limsup\limits_{n\to\infty} \frac{\tau(n)\cdot n^s}{\brackets{\log n}^{s(d-1)}},
 &&c_d=\liminf\limits_{n\to\infty} \frac{\tau(n)\cdot n^s}{\brackets{\log n}^{s(d-1)}}.
\end{align*}
These limits may be both infinite or zero.
They can be interpreted as asymptotic or optimal constants for the bounds
\begin{align}
\label{rateupperbound}
\tau(n) &\leq C \cdot n^{-s}\brackets{\log n}^{s(d-1)}\quad\text{and}\\
\label{ratelowerbound}
\tau(n) &\geq c \cdot n^{-s}\brackets{\log n}^{s(d-1)}
.\end{align}
For any $C>C_d$ respectively $c<c_d$ there is a threshold $n_0\in\IN$ such that
(\ref{rateupperbound}) respectively (\ref{ratelowerbound}) holds for all $n\geq n_0$,
whereas for any $C<C_d$ respectively $c>c_d$ there is no such threshold.
Theorem~\ref{babenko mityagin theorem} states that $C_d$ is finite, whenever $C_1$ is finite,
whereas $c_d$ is positive, whenever $c_1$ is positive.
Theorem~\ref{asymptotic theorem} is more precise. It states that
\begin{equation}
\label{constantsrelations}
 \frac{c_1^d}{{(d-1)!}^s}
 \leq c_d \leq C_d
 \leq \frac{C_1^d}{{(d-1)!}^s}.
\end{equation}
In this section, we will give its proof.
We will also show that equality can but does not always hold.
Note that the proof provides a possibility to track down admissible thresholds $n_0$
for any $C> \frac{C_1^d}{{(d-1)!}^s}$ respectively any $c<\frac{c_1^d}{{(d-1)!}^s}$.

For the proof,
it will be essential to study the asymptotics of the cardinalities
\begin{equation}
\label{cardinality def}
 A_N(r,l)=\#\set{\boldsymbol{n}\in\set{N,N+1,\hdots}^l \mid \prod_{j=1}^l n_j \leq r}
\end{equation}
for $l\in\set{1,\hdots,d}$ and $N\in\IN$ as $r\to \infty$.
In \cite[Lemma 3.2]{ksu}, it is shown that
\begin{equation}
\label{cardinalityasymptoticsexplicitformula}
 r \brackets{\frac{\brackets{\log\frac{r}{2^l}}^{l-1}}{\brackets{l-1}!} - \frac{\brackets{\log\frac{r}{2^l}}^{l-2}}{\brackets{l-2}!}}
 \leq A_2(r,l) \leq
 r \frac{\brackets{\log r}^{l-1}}{\brackets{l-1}!}
\end{equation}
for $l\geq 2$ and $r\in\set{4^l,4^l+1,\hdots}$, see also \cite[Theorem~3.4]{cd}. Consequently, we have
\begin{equation}
\label{cardinalityasymptoticsformula}
 \lim\limits_{r\to\infty} \frac{A_N(r,l)}{r \brackets{\log r}^{l-1}} = \frac{1}{\brackets{l-1}!}
\end{equation}
for $N=2$. In fact, (\ref{cardinalityasymptoticsformula}) holds true for any $N\in\IN$.
This can be derived from the case $N=2$,
but for the reader's convenience, I will give a complete proof.

\begin{lemma}
 \label{cardinalityasymptotics}
 \begin{equation*}
  \lim\limits_{r\to\infty} \frac{A_N(r,l)}{r \brackets{\log r}^{l-1}} = \frac{1}{\brackets{l-1}!}.
 \end{equation*}
\end{lemma}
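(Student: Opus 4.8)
The plan is to prove the general statement by reducing it to the known case $N=2$ recorded in \eqref{cardinalityasymptoticsexplicitformula}, exploiting the fact that changing the lower index of summation only perturbs the count by lower-order terms. I would treat the case $l=1$ separately and trivially: $A_N(r,1)=\#\set{n\geq N : n\leq r}=\max\set{0,\lfloor r\rfloor-N+1}$, so $A_N(r,1)/r\to 1=1/0!$, which settles the base case. From now on assume $l\geq 2$.

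First I would establish the upper bound $\limsup_{r\to\infty} A_N(r,l)/(r(\log r)^{l-1})\leq 1/(l-1)!$. Since enlarging the index set can only increase the cardinality, for $N\geq 2$ we have $A_N(r,l)\leq A_2(r,l)$ and the upper bound in \eqref{cardinalityasymptoticsexplicitformula} gives it directly; for $N=1$ one needs an extra step, because the coordinates may equal $1$. The clean way is to split according to how many coordinates equal $1$: if exactly $k$ of the $n_j$ equal $1$ (there are $\binom{l}{k}$ choices of positions) and the remaining $l-k$ coordinates are $\geq 2$ with product $\leq r$, then
\begin{equation*}
 A_1(r,l)=\sum_{k=0}^{l}\binom{l}{k}A_2(r,l-k),
\end{equation*}
with the convention $A_2(r,0)=1$. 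Each term with $k\geq 1$ is $O(r(\log r)^{l-k-1})=o(r(\log r)^{l-1})$ by the $N=2$ upper bound (or trivially for $l-k\leq 1$), and the $k=0$ term is $A_2(r,l)\leq r(\log r)^{l-1}/(l-1)!$. This yields the upper bound for $N=1$ as well, hence for all $N$.

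For the matching lower bound $\liminf_{r\to\infty} A_N(r,l)/(r(\log r)^{l-1})\geq 1/(l-1)!$, I would use monotonicity in the opposite direction: for any $N\in\IN$ we have $A_N(r,l)\geq$ the count where all coordinates lie in $\set{N,N+1,\dots}$, and comparing with the $N=2$ case is awkward when $N>2$ since the index set is \emph{smaller}. Instead I would bound $A_N(r,l)$ below by restricting to tuples of a special shape: fix the first $l-1$ coordinates to lie in $\set{N,\dots,M\}$ for a large constant $M$, so their product is at most $M^{l-1}$, and let the last coordinate range freely over $\set{N,\dots,\lfloor r/M^{l-1}\rfloor\}$. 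This shows $A_N(r,l)\geq (M-N+1)^{l-1}\cdot(r/M^{l-1}-N)$ for large $r$, which only gives a constant lower bound, not the sharp one — so this crude device is insufficient and a genuine induction on $l$ is needed. The real argument is: write $A_N(r,l)=\sum_{n_l\geq N} A_N(r/n_l,l-1)$, use the inductive hypothesis $A_N(\rho,l-1)\sim \rho(\log\rho)^{l-2}/(l-2)!$ together with a uniform lower bound valid for all $\rho$, and estimate the sum $\sum_{N\leq n_l\leq r} (r/n_l)(\log(r/n_l))^{l-2}$ from below by comparison with the integral $\int_1^r (r/t)(\log(r/t))^{l-2}\,\frac{\d t}{t}$, wait — the correct comparison is with $\sum (r/n)(\log(r/n))^{l-2} \sim r\sum_{n\leq r}(\log(r/n))^{l-2}/n \sim r (\log r)^{l-1}/(l-1)!$, the last step being the standard estimate $\sum_{n\leq r}\frac{(\log(r/n))^{l-2}}{n}\sim \frac{(\log r)^{l-1}}{l-1}$ obtained by Euler--Maclaurin or by comparison with $\int_1^r \frac{(\log(r/t))^{l-2}}{t}\,\d t=\frac{(\log r)^{l-1}}{l-1}$.

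The main obstacle is the lower bound: one must carry out the induction on $l$ carefully, which requires not merely the asymptotic equivalence from the inductive hypothesis but a lower bound $A_N(\rho,l-1)\geq c_{l-1}\,\rho(\log\rho)^{l-2}$ valid uniformly for all $\rho$ above some threshold (so that it can be summed), and then one must handle the contribution of small $n_l$ (where $r/n_l$ is large, the good regime) separately from large $n_l$ (where $r/n_l$ is of constant order and contributes only $O(r)$). I expect the bookkeeping of these error terms, and verifying that the sum $\sum_{N\leq n\leq r}\frac{(\log(r/n))^{l-2}}{n}$ is asymptotic to $\frac{(\log r)^{l-1}}{l-1}$ independently of the starting index $N$, to be the only nontrivial points; everything else is monotonicity and the already-available case $N=2$.
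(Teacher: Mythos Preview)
Your approach is essentially the same as the paper's: the upper bound via $A_N\leq A_2$ for $N\geq 2$ together with the decomposition $A_1(r,l)=\sum_{k=0}^l\binom{l}{k}A_2(r,l-k)$ for $N=1$, and the lower bound via induction on $l$ using the recursion $A_N(r,l)=\sum_{k\geq N}A_N(r/k,l-1)$ combined with integral comparison for $\sum_k k^{-1}(\log(r/k))^{l-2}$. The paper carries out the inductive lower bound exactly as you anticipate: fix an arbitrary $b<1$, use the inductive hypothesis to obtain $A_N(\rho,l-1)\geq b\,\rho(\log\rho)^{l-2}/(l-2)!$ for all $\rho\geq r_0$, sum over $N\leq k\leq \lfloor r/r_0\rfloor$, bound the sum below by $\int_N^{r/r_0}\frac{(\log(r/x))^{l-2}}{x}\,\d x$, and let $b\uparrow 1$.
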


\begin{proof}
 Note that for all values of the parameters,
 \begin{equation}
  A_N(r,l+1) = \sum\limits_{k=N}^\infty A_N\brackets{\frac{r}{k},l},
 \end{equation}
 where $A_N\brackets{\frac{r}{k},l}=0$ for $k > \frac{r}{N^l}$.
 This allows a proof by induction on $l\in\IN$.
 
 Like in estimate (\ref{cardinalityasymptoticsexplicitformula}), we first show that
 \begin{equation}
 \label{cardinalitysharpupperbound}
   A_2(r,l) \leq r \frac{\brackets{\log r}^{l-1}}{\brackets{l-1}!}
 \end{equation}
 for any $l\in\IN$ and $r\geq 1$. This is obviously true for $l=1$.
 On the other hand, if this relation holds for some $l\in\IN$ and if $r\geq 1$, then
 \begin{equation}
  \begin{split}
   &A_2(r,l+1)
   = \sum\limits_{k=2}^{\left\lfloor r\right\rfloor} A_2\brackets{\frac{r}{k},l}
   \leq \sum\limits_{k=2}^{\left\lfloor r \right\rfloor} \frac{r \brackets{\log \frac{r}{k}}^{l-1}}{k \brackets{l-1}!}\\
   &\leq \frac{r}{\brackets{l-1}!} \int_1^r \frac{\brackets{\log \frac{r}{x}}^{l-1}}{x} ~\d x
   = \frac{r}{\brackets{l-1}!} \left[-\frac{1}{l}\brackets{\log \frac{r}{x}}^l \right]_1^r
   = r \frac{\brackets{\log r}^l}{l!}
  \end{split}
 \end{equation}
 and (\ref{cardinalitysharpupperbound}) is proven. In particular, we have
 \begin{equation}
 \label{cardinalitylimsup}
  \limsup\limits_{r\to\infty} \frac{A_N(r,l)}{r \brackets{\log r}^{l-1}} \leq \frac{1}{\brackets{l-1}!}
 \end{equation}
 for $l\in\IN$ and $N=2$. Clearly, the same holds for $N\geq 2$, since $A_N(r,l)$ is decreasing in $N$.
 Relation (\ref{cardinalitylimsup}) for $N=1$ follows from the case $N=2$ by the identity
 \begin{equation}\begin{split}
  A_1(r,l) 
  &= \sum\limits_{m=0}^l \#\set{\boldsymbol{n}\in\IN^l
  \mid \#\set{1\leq j\leq l \mid n_j\neq 1}=m \land \prod_{j=1}^d n_j \leq r}\\
  &= \mathbf{1}_{r\geq 1} + \sum\limits_{m=1}^l \binom{l}{m}\cdot A_2(r,m)
 .\end{split}\end{equation}
 It remains to prove
 \begin{equation}
 \label{cardinalityliminf}
  \liminf\limits_{r\to\infty} \frac{A_N(r,l)}{r \brackets{\log r}^{l-1}} \geq \frac{1}{\brackets{l-1}!}
 \end{equation}
 for $N\in\IN$ and $l\in\IN$. Again, this is obvious for $l=1$.
 Suppose, (\ref{cardinalityliminf}) holds for some $l\in\IN$ and let $b<1$.
 Then there is some $r_0\geq 1$ such that
 \begin{equation}
  A_N(r,l) \geq b r \frac{\brackets{\log r}^{l-1}}{\brackets{l-1}!}
 \end{equation}
 for all $r\geq r_0$ and hence
 \begin{equation}
  \begin{split}
   &A_N(r,l+1)
   \geq \sum\limits_{k=N}^{\left\lfloor r/r_0\right\rfloor} A_N\brackets{\frac{r}{k},l}
   \geq \sum\limits_{k=N}^{\left\lfloor r/r_0\right\rfloor} \frac{b r \brackets{\log \frac{r}{k}}^{l-1}}{k \brackets{l-1}!}\\
   &\geq \frac{b r}{\brackets{l-1}!} \int_N^{\frac{r}{r_0}} \frac{\brackets{\log \frac{r}{x}}^{l-1}}{x} ~\d x
   = \frac{b r}{l!} \brackets{\brackets{\log \frac{r}{N}}^l- \brackets{\log r_0}^l}
   \geq b^2 r \frac{\brackets{\log r}^l}{l!} 
  \end{split}
 \end{equation}
 for large $r$. Since this is true for any $b<1$, the induction step is complete.
\end{proof}

\begin{proof}[Proof of Theorem~\ref{asymptotic theorem}]
 Without loss of generality, we can assume that $s=1$ and $\sigma(1)=1$.
 If $\sigma(1)\neq 0$, the stated inequalities follow from the corresponding
 inequalities for the sequence $\tilde{\sigma}=\brackets{\sigma/\sigma(1)}^{1/s}$.
 If $\sigma(1)=0$, they are trivial.
 
 Proof of $(i)$: Let $c_3>c_2>c_1>c$.
 There is some $N\in\IN$ such that for any $n\geq N$, we have
 \begin{equation}
 \label{asymptotictheoremupperbounddim1}
  \sigma(n)\leq c_1\, n^{-1}.
 \end{equation}
 We want to prove
 \begin{equation}
 \label{asymptotictheoremupperbound}
  \limsup\limits_{n\to\infty}  \frac{\tau(n)\,n}{\brackets{\log n}^{d-1}} \leq \frac{c^d}{(d-1)!}.
 \end{equation}
 Since $n/\brackets{\log n}^{d-1}$ is finally increasing,
 instead of giving an upper bound for $\tau(n)$ in terms of $n$,
 we can just as well give an upper bound for $n$ in terms of $\tau(n)$ to obtain (\ref{asymptotictheoremupperbound}).
 Clearly, there are at least $n$ elements in the tensor power sequence greater than or equal to $\tau(n)$ and hence
 \begin{equation}
  \begin{split}
   n &\leq \#\set{\boldsymbol{n}\in\IN^d \mid \sigma_d(\boldsymbol{n}) \geq \tau(n)}\\
   &= \sum\limits_{l=0}^d \#\set{\boldsymbol{n}\in\IN^d
   \mid \#\set{1\leq j \leq d \mid n_j\geq N}=l \land \sigma_d(\boldsymbol{n}) \geq \tau(n)}\\
   &\overset{\sigma(1)=1}{\leq} \sum\limits_{l=0}^d \binom{d}{l} N^{d-l}\, \#\set{\boldsymbol{n}\in\set{N,N+1,\hdots}^l
   \mid \sigma_d(\boldsymbol{n}) \geq \tau(n)}
  .\end{split}
 \end{equation}
 For every $\boldsymbol{n}$ in the last set, relation (\ref{asymptotictheoremupperbounddim1}) implies
 that $\prod_{j=1}^d n_j \leq c_1^l\,\tau(n)^{-1}$. Thus,
 \begin{equation}
   n\leq \sum\limits_{l=0}^d \binom{d}{l}\, N^{d-l}\, A_N\brackets{c_1^l\,\tau(n)^{-1},l}.
 \end{equation}
 Lemma~\ref{cardinalityasymptotics} yields that,
 if $n$ and hence $c_1^l\,\tau(n)^{-1}$ is large enough,
 \begin{equation}
  A_N\brackets{c_1^l\,\tau(n)^{-1},l}
  \leq \frac{c_2^l\,\tau(n)^{-1}}{\brackets{l-1}!} \brackets{\log \brackets{c_2^l\,\tau(n)^{-1}}}^{l-1}
 \end{equation}
 for $l\in\set{1,\hdots,d}$. Letting $n\to\infty$, the term for $l=d$ is dominant and hence
 \begin{equation}
  n \leq \frac{c_3^d\,\tau(n)^{-1}}{\brackets{d-1}!} \brackets{\log \brackets{c_3^d\,\tau(n)^{-1}}}^{d-1}
 \end{equation}
 for large values of $n$. By the monotonicity of $n/\brackets{\log n}^{d-1}$, we obtain
 \begin{equation}
  \frac{\tau(n)\,n}{\brackets{\log n}^{d-1}}
  \leq \frac{c_3^d}{\brackets{d-1}!} \cdot
  \brackets{\frac{\log \brackets{c_3^d \tau(n)^{-1}}}{\log \brackets{\tau(n)^{-1}\cdot \frac{c_3^d}{\brackets{d-1}!} \brackets{\log \brackets{c_3^d \tau(n)^{-1}}}^{d-1}}}}^{d-1}
 .\end{equation}
 The fraction in brackets tends to one as $n$ and hence $\tau(n)^{-1}$ tends to infinity and thus
 \begin{equation}
  \limsup\limits_{n\to\infty} \frac{\tau(n)\,n}{\brackets{\log n}^{d-1}} \leq \frac{c_3^d}{\brackets{d-1}!}
 .\end{equation}
 Since this is true for any $c_3>c$, the proof of (\ref{asymptotictheoremupperbound}) is complete.
 
 Proof of $(ii)$: Let $0<c_3<c_2<c_1<c$. There is some $N\in\IN$ such that for any $n\geq N$,
 we have
 \begin{equation}
 \label{asymptotictheoremlowerbounddim1}
  \sigma(n)\geq c_1\, n^{-1}.
 \end{equation}
 We want to prove
 \begin{equation}
 \label{asymptotictheoremlowerbound}
  \liminf\limits_{n\to\infty} \frac{\tau(n)\,n}{\brackets{\log n}^{d-1}} \geq \frac{c^d}{{(d-1)!}^s}
 \end{equation}
 for any $d\in\IN$. Clearly, there are at most $n-1$ elements in the tensor power sequence greater than $\tau(n)$ and hence
 \begin{equation}
  n > \#\set{\boldsymbol{n}\in\IN^d \mid \sigma_d(\boldsymbol{n}) > \tau(n)}
  \geq \#\set{\boldsymbol{n}\in\set{N,N+1,\hdots}^d \mid \sigma_d(\boldsymbol{n}) > \tau(n)}
 .\end{equation}
 Relation (\ref{asymptotictheoremlowerbounddim1}) implies that every $\boldsymbol{n}\in\set{N,N+1,\hdots}^d$
 with $\prod_{j=1}^d n_j < c_1^d\, \tau(n)^{-1}$ is contained in the last set.
 This observation and Lemma~\ref{cardinalityasymptotics} yield that
 \begin{equation}
  n > A_N\brackets{c_2^d\, \tau(n)^{-1},d} \geq \frac{c_3^d\, \tau(n)^{-1}}{\brackets{d-1}!} \brackets{\log \brackets{c_3^d\, \tau(n)^{-1}}}^{d-1}
 \end{equation}
 for sufficiently large $n$. By the monotonicity of $n/\brackets{\log n}^{d-1}$ for large $n$, we obtain
 \begin{equation}
  \frac{\tau(n)\,n}{\brackets{\log n}^{d-1}}
  \geq \frac{c_3^d}{\brackets{d-1}!} \cdot \brackets{\frac{\log \brackets{c_3^d \tau(n)^{-1}}}{\log \brackets{\frac{c_3^d}{\brackets{d-1}!} \brackets{\log \brackets{c_3^d \tau(n)^{-1}}}^{d-1} \tau(n)^{-1} }}}^{d-1}
 .\end{equation}
 The fraction in brackets tends to one as $n$ and hence $\tau(n)^{-1}$ tends to infinity and thus
 \begin{equation}
  \liminf\limits_{n\to\infty} \frac{\tau(n)\,n}{\brackets{\log n}^{d-1}}
  \geq \frac{c_3^d}{\brackets{d-1}!}
 .\end{equation}
 Since this is true for any $c_3<c$, the proof of (\ref{asymptotictheoremlowerbound}) is complete.
\end{proof}

This proves the relations~(\ref{constantsrelations}) of the asymptotic constants.
Obviously, there must be equality in all these relations,
if the limit of $\sigma(n)\,n^s$ for $n\to\infty$ exists.
It is natural to ask, whether any of these equalities always holds true.
The answer is no, as shown by the following example.

\begin{ex}
 The sequence $\sigma$, defined by $\sigma(n)=2^{-k}$ for $n\in\set{2^k,\hdots,2^{k+1}-1}$ and $k\in\IN_0$, decays linearly in $n$, but is constant on segments of length $2^k$.
 It satisfies
 \begin{equation}
  C_1=\limsup\limits_{n\to\infty} \sigma(n) n =\lim\limits_{k\to\infty} 2^{-k}\cdot\brackets{2^{k+1}-1}=2
 \end{equation}
 and
 \begin{equation}
  c_1=\liminf\limits_{n\to\infty} \sigma(n) n =\lim\limits_{k\to\infty} 2^{-k}\cdot 2^k=1
 .\end{equation}
 Also the values of the nonincreasing rearrangement $\tau$ of its $d$th tensor power
 are of the form $2^{-k}$ for some $k\in\IN_0$, where
 \begin{equation}
  \begin{split}
  &\#\set{n\in\IN \mid \tau(n)=2^{-k}}
  = \sum\limits_{\abs{\boldsymbol{k}}=k} \#\set{\boldsymbol{n}\in\IN^d\mid \sigma(n_j)=2^{-k_j} \text{ for } j=1\hdots d}\\
  &= \sum\limits_{\abs{\boldsymbol{k}}=k} 2^k
  = 2^k \cdot \binom{k+d-1}{d-1}
  = \frac{2^k}{(d-1)!}\cdot (k+1)\cdot\hdots\cdot(k+d-1)
  .\end{split}
 \end{equation}
 Hence, $\tau(n)=2^{-k}$ for $N(k-1,d)< n\leq N(k,d)$ with $N(-1,d)=0$ and
 \begin{equation}
  N(k,d)= \sum\limits_{j=0}^k \frac{2^j}{(d-1)!}\cdot (j+1)\cdot\hdots\cdot(j+d-1)
 \end{equation}
 for $k\in\IN_0$. The monotonicity of $n / \brackets{\log n}^{d-1}$ for large $n$ implies
 \begin{equation}
  \label{limsupinfexample1}
   C_d=\limsup\limits_{n\to\infty} \frac{\tau(n)\cdot n}{\brackets{\log n}^{d-1}} = \lim\limits_{k\to\infty} \frac{2^{-k}\cdot N(k,d)}{\brackets{\log N(k,d)}^{d-1}}
  \end{equation}
  and
  \begin{equation}
  \label{limsupinfexample2}
   c_d=\liminf\limits_{n\to\infty} \frac{\tau(n)\cdot n}{\brackets{\log n}^{d-1}} = \lim\limits_{k\to\infty} \frac{2^{-k}\cdot N(k-1,d)}{\brackets{\log N(k-1,d)}^{d-1}}
 .\end{equation}
 We insert the relations
 \begin{equation}
   N(k,d) \leq \frac{(k+d)^{d-1}}{(d-1)!}\sum\limits_{j=0}^k 2^j \leq \frac{2^{k+1}\cdot(k+d)^{d-1}}{(d-1)!}
 \end{equation}
 and
 \begin{equation}
   N(k,d) \geq \frac{(k-l)^{d-1}}{(d-1)!}\sum\limits_{j=k-l+1}^k 2^j = \frac{2^{k+1}(k-l)^{d-1}}{(d-1)!}\brackets{1-2^{-l}}
 \end{equation}
 for arbitrary $l\in\IN$ in (\ref{limsupinfexample1}) and (\ref{limsupinfexample2}) and obtain
  \begin{equation}
   C_d = 2\cdot\frac{\brackets{\log_2 e}^{d-1}}{(d-1)!} \quad\quad\text{and}\quad\quad
   c_d = \frac{\brackets{\log_2 e}^{d-1}}{(d-1)!}.
 \end{equation}
 In particular, 
 \begin{equation}
  \frac{c_1^d}{\brackets{d-1}!}
  < c_d
  < C_d
  < \frac{C_1^d}{\brackets{d-1}!}
 \quad\text{for } d\neq 1.\end{equation}
\end{ex}

More generally, the tensor product of $d$ nonincreasing zero sequences $\sigma^{(j)}:\IN\to\IR$
is the sequence $\sigma_d:\IN^d\to\IR$,
where $\sigma_d(n_1,\dots,n_d)=\prod_{j=1}^d\sigma^{(j)}(n_j)$.
It can be rearranged to a nonincreasing zero sequence $\tau$.
An example of such a sequence is given by the $L_2$-approximation numbers of
Sobolev functions on the $d$-torus with mixed order $(s_1,\hdots,s_d)\in\IR_+^d$.
They are generated by the $L_2$-approximation numbers of the univariate Sobolev spaces $H^{s_j}\brackets{\mathbb{T}}$,
which are of order $n^{- s_j}$.
It is known that $\tau$
has the order $n^{-s}\brackets{\log n}^{s(l-1)}$ in this case,
where $s$ is the minimum among all numbers $s_j$ and $l$ is its multiplicity.
This was proven by Mityagin \cite{mityagin} for integer vectors $(s_1,\hdots,s_d)$ and by Nikol’skaya \cite{nikolskaya} in the general case.
See \cite[pp.\,32, 36, 72]{temlyakov} and \cite{dtu} for more details.
It is not hard to deduce that
the order of decay of $\tau$ is at least (at most) $n^{-s}\brackets{\log n}^{s(l-1)}$,
whenever the order of the factor sequences $\sigma^{(j)}$
is at least (at most) $n^{-s_j}$.
But in contrast to the tensor power case, asymptotic constants of tensor product sequences in general
are not determined by the asymptotic constants of the factor sequences.

\begin{ex}
 Consider the sequences $\sigma,\mu,\tilde\mu: \IN\to\IR$ with
 \begin{equation}
  \sigma(n)=n^{-1}, \quad \mu(n)=n^{-2}, \quad \tilde\mu(n)=
  \left\{\begin{array}{lr}
        1, & \text{for } n\leq N,\\
        n^{-2}, & \text{for } n>N,
        \end{array}\right.
 \end{equation}
for some $N\in\IN$.
The tensor product $\sigma_2:\IN^2\to\IR$ of $\sigma$ and $\mu$ has the form
\begin{equation}
 \sigma_2(n_1,n_2)= n_1^{-1} n_2^{-2}
\end{equation}
and its nonincreasing rearrangement $\tau$ satisfies for all $n\in\IN$ that
\begin{equation}
 \begin{split}
  n &\leq \#\set{(n_1,n_2)\in \IN^2 \mid \sigma_2(n_1,n_2) \geq \tau(n)}
  = \#\set{(n_1,n_2) \mid {n}_1 n_2^2 \leq \tau(n)^{-1}}\\
  &\leq \sum\limits_{ n_2=1}^\infty \#\set{ n_1\in\IN \mid  n_1 \leq a_n^{-1}  n_2^{-2}}
  \leq \tau(n)^{-1} \sum\limits_{ n_2=1}^\infty n_2^{-2}
  \leq 2 \tau(n)^{-1},
 \end{split}
\end{equation}
and hence
\begin{equation}
 \limsup\limits_{n\to\infty} \tau(n)n \leq 2.
\end{equation}
The tensor product $\tilde\sigma_2:\IN^2\to\IR$ of $\sigma$ and $\tilde\mu$ takes the form
\begin{equation}
 \tilde \sigma_2(n_1,n_2)=
 \left\{\begin{array}{lr}
        n_1^{-1}, & \text{if } n_2\leq N,\\
        n_1^{-1} n_2^{-2}, & \text{else},
        \end{array}\right.
\end{equation}
and its nonincreasing rearrangement $\tilde\tau$ satisfies for all $n\in\IN$ that
\begin{equation}
\begin{split}
  n &\geq \#\set{(n_1,n_2)\in\IN^2 \mid\tilde \sigma_2(n_1,n_2) >\tilde a_n}
  \geq N \#\set{n_1\in\IN \mid n_1^{-1} >\tilde \tau(n)}\\
  &\geq N \brackets{\tilde \tau(n)^{-1} -1}
\end{split}
\end{equation}
and thus
\begin{equation}
 \liminf\limits_{n\to\infty}\tilde \tau(n) n \geq N.
\end{equation}
Hence, matching asymptotic constants of the factor sequences do not necessarily lead to
matching asymptotic constants of the tensor product sequences.

\end{ex}

\section{Preasymptotic Behavior of Tensor Power Sequences}
\label{preasymptoticssection}

In order to estimate the size of $\tau(n)$ for small values of $n$,
we give explicit estimates for $A_2(r,l)$ from (\ref{cardinality def})
for $l\leq d$ and small values of $r$.
The right asymptotic behavior of these estimates, however, is less important.
Note that $A_2(r,l)=0$ for $r< 2^l$.

\begin{lemma}
\label{hyperboliccrosscountinglemma}
Let $r\geq 0$ and $l\in\IN$. For any $\delta>0$ we have
\begin{align*}
 &A_2(r,l) \leq \frac{r^{1+\delta}}{\delta^{l-1}}  	&&\text{and}\\
 &A_2(r,l) \geq \frac{r}{3\cdot 2^{l-1}}			&&\text{for } r\geq 2^l.
\end{align*}
\end{lemma}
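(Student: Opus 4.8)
The plan is to prove both bounds by induction on $l$, using the recursion $A_2(r,l+1)=\sum_{k=2}^{\lfloor r\rfloor}A_2(r/k,l)$ already recorded in the proof of Lemma~\ref{cardinalityasymptotics}. For the upper bound, the base case $l=1$ is $A_2(r,1)=\#\{n\geq 2:n\leq r\}\leq r\leq r^{1+\delta}$. For the step, I would estimate
\[
 A_2(r,l+1)=\sum_{k=2}^{\lfloor r\rfloor}A_2\!\brackets{\tfrac{r}{k},l}
 \leq \frac{r^{1+\delta}}{\delta^{l-1}}\sum_{k=2}^{\infty}k^{-(1+\delta)}
 \leq \frac{r^{1+\delta}}{\delta^{l-1}}\int_1^\infty x^{-(1+\delta)}\,\d x
 = \frac{r^{1+\delta}}{\delta^{l-1}}\cdot\frac{1}{\delta}
 = \frac{r^{1+\delta}}{\delta^{l}},
\]
which is exactly the claimed form with $l$ replaced by $l+1$. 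The only thing to check is that the sum $\sum_{k\geq 2}k^{-(1+\delta)}$ is bounded by $\int_1^\infty x^{-(1+\delta)}\,\d x=1/\delta$, which holds by comparing the sum to the integral term by term ($k^{-(1+\delta)}\leq\int_{k-1}^{k}x^{-(1+\delta)}\,\d x$). Note this also covers $r<2$, where the sum is empty and $A_2(r,l+1)=0\leq r^{1+\delta}/\delta^l$.

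For the lower bound, again induct on $l$, with base case $l=1$: for $r\geq 2$, $A_2(r,1)=\lfloor r\rfloor-1\geq r/3$ (since $\lfloor r\rfloor\geq r/1.5$ for $r\geq2$ gives $\lfloor r\rfloor-1\geq 2r/3-1\geq r/3$ when $r\geq3$, and the cases $2\leq r<3$ are checked directly). For the step, assume $A_2(\rho,l)\geq \rho/(3\cdot 2^{l-1})$ for all $\rho\geq 2^l$. Given $r\geq 2^{l+1}$, I would keep only the terms $k=2,\dots,\lfloor r/2^l\rfloor$ in the recursion, since for those $k$ we have $r/k\geq 2^l$ and the inductive hypothesis applies:
\[
 A_2(r,l+1)\;\geq\;\sum_{k=2}^{\lfloor r/2^l\rfloor} A_2\!\brackets{\tfrac{r}{k},l}
 \;\geq\;\sum_{k=2}^{\lfloor r/2^l\rfloor}\frac{r}{3\cdot 2^{l-1}\,k}.
\]
It then suffices to show $\sum_{k=2}^{\lfloor r/2^l\rfloor}k^{-1}\geq 1/2$, so that $A_2(r,l+1)\geq \tfrac{r}{3\cdot2^{l-1}}\cdot\tfrac12=\tfrac{r}{3\cdot2^{l}}$. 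Since $r\geq 2^{l+1}$ we have $\lfloor r/2^l\rfloor\geq 2$, so the sum contains at least the term $k=2$, giving $\sum k^{-1}\geq 1/2$. That closes the induction.

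The main obstacle — such as it is — is bookkeeping with the floor functions in the low-$r$ regime: one must make sure the base-case estimate $\lfloor r\rfloor-1\geq r/3$ really holds for all $r\geq 2$ (not just integers), and that in the inductive step the truncated range $2\leq k\leq\lfloor r/2^l\rfloor$ is nonempty and genuinely satisfies $r/k\geq 2^l$. Both are elementary once $r\geq 2^l$ (resp. $r\geq 2^{l+1}$) is used, so no real difficulty arises; the crux is simply choosing the clean truncation point $k\leq\lfloor r/2^l\rfloor$ that makes the induction hypothesis applicable while still retaining enough mass in the sum.
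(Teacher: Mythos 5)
Your proof is correct and follows essentially the same route as the paper: induction on $l$ via the recursion $A_2(r,l+1)=\sum_{k\geq 2}A_2(r/k,l)$, with the integral bound on $\sum_{k\geq 2}k^{-(1+\delta)}$ for the upper estimate, and the $k=2$ term for the lower estimate. The only cosmetic difference is that the paper discards all terms except $k=2$ immediately in the lower bound step, writing $A_2(r,l+1)\geq A_2(r/2,l)$ directly, whereas you retain the range $k\leq \lfloor r/2^l\rfloor$ before observing that the $k=2$ term alone suffices.
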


\begin{proof}
 Both estimates hold in the case $l=1$, since
 \begin{equation}
  A_2(r,1)
  = \left\{\begin{array}{lr}
        0, & \text{for } r< 2,\\
        \lfloor r \rfloor -1, & \text{for } r\geq 2
        .\end{array}\right.
 \end{equation}
If they hold for some $l\in\IN$, then
\begin{equation}
 \begin{split}
  A_2(r,l+1)
  &= \sum\limits_{k=2}^\infty A_2\left(\frac{r}{k},l\right)
  \leq\ \frac{r^{1+\delta}}{\delta^{l-1}} \sum\limits_{k=2}^\infty \frac{1}{k^{1+\delta}}\\
  &\leq\ \frac{r^{1+\delta}}{\delta^{l-1}} \int_1^\infty \frac{1}{x^{1+\delta}} ~\d x
  =\ \frac{r^{1+\delta}}{\delta^l}
 \end{split}
\end{equation}
and for $r\geq 2^{l+1}$
\begin{equation}
  A_2(r,l+1)
  \geq A_2\brackets{\frac{r}{2},l}
  \geq \frac{r/2}{3\cdot 2^{l-1}}
  = \frac{r}{3\cdot 2^l}.
\end{equation}
We have thus proven Lemma~\ref{hyperboliccrosscountinglemma} by induction.
\end{proof}

\begin{thm}
\label{preasymptoticstheorem}
Let $\sigma$ be a nonincreasing zero sequence with $1=\sigma(1)>\sigma(2)>0$
and let $\tau$ be the nonincreasing rearrangement of its $d$th tensor power.
\begin{itemize}
 \item[(i)] Suppose that $\sigma(n) \leq C\, n^{-s}$ for some $s,C>0$ and all $n\geq 2$ and let $\delta\in (0,1]$.
 For any $n\in\IN$,
 \begin{equation*}
  \tau(n) \leq\, \brackets{\frac{\tilde{C}(\delta)}{n}}^{\alpha(d,\delta)},
  \quad\text{where}
  \end{equation*}
  \begin{equation*}                 
  \tilde{C}(\delta)=\exp{\brackets{\frac{C^{(1+\delta)/s}}{\delta}}} \quad\text{and}\quad
  \alpha(d,\delta)=\frac{\log\sigma(2)^{-1}}{\log \brackets{\sigma(2)^{-(1+\delta)/s}\cdot d}}>0
  .\end{equation*}
 \item[(ii)] Let $v=\#\set{n \geq 2\mid \sigma(n)= \sigma(2)}$. For any $n\in\set{2,\hdots,(1+v)^d}$,
 \begin{equation*}
   \tau(n) \geq\, \sigma(2) \cdot \brackets{\frac{1}{n}}^{\beta(d,n)},
  \quad\text{where}\quad
  \beta(d,n) = \frac{\log \sigma(2)^{-1}}{\log \brackets{1+\frac{v}{\log_{1+v} n}\cdot d}}>0.
 \end{equation*}
\end{itemize}
\end{thm}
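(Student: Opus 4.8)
The plan is to study, for a threshold $T>0$, the counting function $M(T)=\#\set{\boldsymbol{n}\in\IN^d \mid \sigma_d(\boldsymbol{n})\ge T}$. Since $\tau$ is the nonincreasing rearrangement of $\sigma_d$, at least $n$ entries of $\sigma_d$ are $\ge\tau(n)$, so $M(\tau(n))\ge n$: any upper bound $M(T)\le F(T)$ for $T\le1$ gives $n\le F(\tau(n))$ and, after solving for $\tau(n)$, the upper estimate of part~$(i)$, while a lower bound for the number of \emph{large} entries of $\sigma_d$ gives the lower estimate of part~$(ii)$. In both directions the key is to split a tuple $\boldsymbol{n}$ according to its set of \emph{active} coordinates $S=\set{j\mid n_j\ge2}$: inactive coordinates contribute the factor $\sigma(1)=1$, and each active coordinate a factor at most $\sigma(2)$, so a tuple with $\abs{S}=l$ and $\sigma_d(\boldsymbol{n})\ge T$ forces $l\le\log_{1/\sigma(2)}(1/T)$. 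This logarithmic cap on $l$ is what turns the polynomial-in-$d$ constants of Babenko--Mityagin type bounds into the claimed exponents, and producing these exponents \emph{exactly} is the main obstacle.

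For part~$(i)$ it suffices to prove $M(T)\le\tilde C(\delta)\,T^{-1/\alpha(d,\delta)}$ for all $T\in(0,1]$, and then to put $T=\tau(n)$. Writing $a=\sigma(2)^{-1}>1$, summing over $S$, and using $\sigma(m)\le Cm^{-s}$ to pass from $\prod_{j\in S}\sigma(n_j)\ge T$ to $\prod_{j\in S}n_j\le C^{l/s}T^{-1/s}$, the cap above yields
\begin{equation*}
 M(T)\ \le\ 1+\sum_{l=1}^{L}\binom{d}{l}\,A_2\brackets{C^{l/s}T^{-1/s},\,l},\qquad L=\min\set{d,\ \lfloor\log_a(1/T)\rfloor}.
\end{equation*}
Now apply Lemma~\ref{hyperboliccrosscountinglemma} in the form $A_2(r,l)\le r^{1+\delta}/\delta^{l-1}$, bound $\binom{d}{l}\le d^l/l!$, and use $l\le L\le\log_a(1/T)$ to replace $d^l$ by $d^{\log_a(1/T)}=T^{-\log_a d}$; with $\sum_{l\ge1}y^l/l!=e^{y}-1$ for $y=C^{(1+\delta)/s}/\delta$ and the algebraic identity $1/\alpha(d,\delta)=(1+\delta)/s+\log_a d$, this collapses to $M(T)\le1+\delta\,(\tilde C(\delta)-1)\,T^{-1/\alpha(d,\delta)}$. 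Since $\tilde C(\delta)\ge1$ and $\delta\le1$, one has $1+\delta(\tilde C(\delta)-1)t\le\tilde C(\delta)\,t$ for every $t\ge1$, and $t=T^{-1/\alpha(d,\delta)}\ge1$, which finishes part~$(i)$. The delicate point is the lone ``$+1$'' from the all-inactive tuple: one must carry $e^{y}-1$ rather than $e^{y}$ and invoke $\delta\le1$ at the very last step.

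For part~$(ii)$ I bound $\sigma$ from below by the truncated two-level sequence $\rho$ with $\rho(1)=1$, $\rho(j)=\sigma(2)$ for $2\le j\le1+v$, and $\rho(j)=0$ for $j>1+v$; by the definition of $v$ we have $\rho\le\sigma$ pointwise, hence $\tau\ge\tau_\rho$, the nonincreasing rearrangement of the $d$th tensor power $\rho_d$ of $\rho$. Now $\rho_d$ takes only the values $\sigma(2)^{l}$, each with multiplicity $\binom{d}{l}v^{l}$, so $\tau_\rho(n)=\sigma(2)^{L(n)}$ for $n\le(1+v)^d$, where $L(n)=\min\set{l\mid N_l\ge n}$ and $N_l=\sum_{j=0}^{l}\binom{d}{j}v^{j}$. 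Put $m=L(n)-1\in\set{0,\dots,d-1}$; then $n>N_m$, and (for $m\ge1$) the elementary estimate $\binom{d}{j}\ge\binom{m}{j}(d/m)^{j}$, valid for $j\le m\le d$, gives $N_m\ge(1+dv/m)^{m}$. Combining $n>(1+dv/m)^m$ with $1+dv/m\ge1+v$ (which holds since $m<d$) yields $\log_{1+v}n>m$, hence $1+dv/\log_{1+v}n<1+dv/m$, and therefore
\begin{equation*}
 m\,\log\brackets{1+\tfrac{dv}{\log_{1+v}n}}\ <\ m\,\log\brackets{1+\tfrac{dv}{m}}\ \le\ \log N_m\ <\ \log n.
\end{equation*}
Rearranged, this inequality is precisely $\sigma(2)^{L(n)}\ge\sigma(2)\,n^{-\beta(d,n)}$, so $\tau(n)\ge\tau_\rho(n)=\sigma(2)^{L(n)}$ gives the claim; the case $m=0$, i.e.\ $2\le n\le1+dv$, is trivial since then $\tau(n)\ge\sigma(2)\ge\sigma(2)\,n^{-\beta(d,n)}$. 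Finally, $\alpha(d,\delta),\beta(d,n)>0$ because $\sigma(2)<1$ makes all numerators positive while $\sigma(2)^{-(1+\delta)/s}d>1$ and $1+vd/\log_{1+v}n>1$ make the denominators positive. (Theorem~\ref{preasymptoticstheorem1} then follows by applying this to the normalized sequence $\sigma/\sigma(1)$ and specializing $\delta=1$.)
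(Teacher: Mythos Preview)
Your proof is correct and follows essentially the same route as the paper's: both parts split tuples by the number $l$ of active coordinates, use the cap $l\le\log_{1/\sigma(2)}(1/T)$, invoke Lemma~\ref{hyperboliccrosscountinglemma} together with $\binom{d}{l}\le d^l/l!$ for part~$(i)$, and the elementary inequality $\binom{d}{j}\ge\binom{m}{j}(d/m)^j$ followed by the bootstrap $m<\log_{1+v}n$ for part~$(ii)$. The only cosmetic differences are that the paper absorbs the ``$+1$'' for the all-inactive tuple via $1\le\tau(n)^{-(1+\delta)/s}$ rather than your final inequality $1+\delta(\tilde C-1)t\le\tilde C\,t$, and that you frame part~$(ii)$ through the auxiliary two-level sequence $\rho$ while the paper works directly with $\tau(n)\in[\sigma(2)^{L+1},\sigma(2)^L)$; the underlying counting is identical (your $m$ is the paper's $L$).
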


The assumption $\sigma(1)=1$ merely reduces the complexity of the estimates.
We can easily translate the above estimates for arbitrary $\sigma(1)>\sigma(2)>0$ by applying
Theorem~\ref{preasymptoticstheorem} to the sequence $\brackets{\sigma(n)/\sigma(1)}_{n\in\IN}$.
We simply have to replace $\sigma(2)$ by $\sigma(2)/\sigma(1)$, $C$ by $C/\sigma(1)$
and $\tau(n)$ by $\tau(n)/\sigma(1)^d$.
Theorem $\ref{preasymptoticstheorem1}$, as stated in the introduction,
is an immediate consequence of Theorem $\ref{preasymptoticstheorem}$.
Obviously, $\sigma(2)=\sigma(1)$ implies $\tau(n)=\sigma(1)^d$ for every $n\leq (1+v)^d$,
whereas $\sigma(2)=0$ implies $\tau(n)=0$ for every $n\geq 2$.

\begin{proof}
Part $(i)$:
Let $n\in\IN$. There is some $L\geq 0$ with $\tau(n)=\sigma(2)^L$.
If $\sigma_d(\boldsymbol{n}) \geq \tau(n)$, the number $l$ of components of $\boldsymbol{n}$ not equal to one is at most $\lfloor L\rfloor$ and hence
\begin{equation}
\label{nononebasicupperbound}
 \begin{split}
  n &\leq \#\set{\boldsymbol{n}\in\IN^d \mid \sigma_d(\boldsymbol{n}) \geq \tau(n)}\\
  &= \sum\limits_{l=0}^{\min\set{\lfloor L\rfloor,d}}  \#\set{\boldsymbol{n}\in\IN^d
  \mid \#\set{1\leq j \leq d \mid n_j\neq 1}=l \land \sigma_d(\boldsymbol{n}) \geq \tau(n)}\\
  &= 1+ \sum\limits_{l=1}^{\min\set{\lfloor L\rfloor,d}} \binom{d}{l} \#\set{\boldsymbol{n}\in\set{2,3,\hdots}^l
  \mid \sigma_d(\boldsymbol{n}) \geq \tau(n)}
 .\end{split}
\end{equation}
Since $\sigma_d(\boldsymbol{n})\leq C^l\, \prod_{j=1}^l n_j^{-s}$ for $\boldsymbol{n}\in\set{2,3,\hdots}^l$,
Lemma~\ref{hyperboliccrosscountinglemma} yields for $l\leq\min\set{\lfloor L\rfloor,d}$,
\begin{equation}
\begin{split}
  \#\set{\boldsymbol{n}\in\set{2,3,\hdots}^l \mid \sigma_d(\boldsymbol{n}) \geq \tau(n)}
  &\leq A_2\brackets{C^{l/s} \tau(n)^{-1/s},l}\\
  &\leq C^{(1+\delta)l/s} \tau(n)^{-(1+\delta)/s} \delta^{-l}
  \end{split}
\end{equation}
Obviously,
\begin{equation}
 1\leq \binom{d}{0}\cdot C^{0/s} \tau(n)^{-(1+\delta)/s}\delta^{0}.
\end{equation}
Inserting these bounds in (\ref{nononebasicupperbound}) yields
\begin{equation}
 \begin{split}
  n &\leq \sum\limits_{l=0}^{\min\set{\lfloor L\rfloor,d}} \binom{d}{l}\cdot C^{(1+\delta)l/s} \tau(n)^{-(1+\delta)/s} \delta^{-l}
  \leq \tau(n)^{-(1+\delta)/s} \sum\limits_{l=0}^{\min\set{\lfloor L\rfloor,d}} \frac{d^l}{l!} C^{(1+\delta)l/s} \delta^{-l}\\
  &\leq \sigma(2)^{-(1+\delta)L/s} d^L \sum\limits_{l=0}^{\min\set{\lfloor L\rfloor,d}} \frac{\brackets{\frac{C^{(1+\delta)/s}}{\delta}}^l}{l!}
  \leq \brackets{\sigma(2)^{-(1+\delta)/s}\cdot d}^L \exp{\brackets{\frac{C^{(1+\delta)/s}}{\delta}}}
 \end{split}
\end{equation}
and hence
\begin{equation}
 L \geq \frac{\log n - \frac{C^{(1+\delta)/s}}{\delta}}{\log \brackets{\sigma(2)^{-(1+\delta)/s}\cdot d}}.
\end{equation}
Thus
\begin{equation}
\label{otherformofpreasymptotics1}
\tau(n)= \sigma(2)^L
\leq \exp\brackets{\frac{\brackets{\frac{C^{(1+\delta)/s}}{\delta} -\log n} \log\sigma(2)^{-1}}{\log \brackets{\sigma(2)^{-(1+\delta)/s}\cdot d}}}
= \brackets{\frac{\exp{\brackets{\frac{C^{(1+\delta)/s}}{\delta}}}}{n}}^{\alpha(d,\delta)}
\end{equation}
with
\begin{equation}
 \alpha(d,\delta)=\frac{\log\sigma(2)^{-1}}{\log \brackets{\sigma(2)^{-(1+\delta)/s}\cdot d}}.
\end{equation}

Part $(ii)$:
Let $n\in\set{2,\hdots,(1+v)^d}$. Then $\sigma(2)^d\leq \tau(n)\leq \sigma(2)$.
If $\tau(n)$ equals $\sigma(2)$, the lower bound is trivial.
Else, there is some $L\in\{1,\hdots,d-1\}$ such that $\tau(n)\in [\sigma(2)^{L+1},\sigma(2)^L)$.
Clearly,
\begin{equation}
\label{nononebasiclowerbound}
n > \#\set{\boldsymbol{n}\in\IN^d \mid \sigma_d(\boldsymbol{n})> \tau(n) }
\geq \sum_{l=1}^L \binom{d}{l} \#\set{\boldsymbol{n}\in\set{2,3,\hdots}^l \mid \sigma_d(\boldsymbol{n})> \tau(n)}.
\end{equation}
If $l\leq L$, we have $\sigma_d(\boldsymbol{n})> \tau(n)$ for every $\boldsymbol{n}\in\set{2,\hdots,1+v}^l$ and hence
\begin{equation}
 \label{lowerboundonn}
 n \geq \sum_{l=0}^L \binom{d}{l}\, v^l
 \geq \sum_{l=0}^L \binom{L}{l} \brackets{\frac{d}{L}}^l v^l
 =\brackets{1+\frac{vd}{L}}^L.
\end{equation}
Since $d/L$ is bigger than one, this yields in particular that
\begin{equation}
 \label{preestimateL}
 L \leq \log_{1+v} n.
\end{equation}
We insert this auxiliary estimate on $L$ in (\ref{lowerboundonn}) and get
\begin{equation}
 n \geq \brackets{1+\frac{vd}{\log_{1+v} n}}^L,
\end{equation}
or equivalently
\begin{equation}
 L \leq \frac{\log n}{\log \brackets{1+\frac{vd}{\log_{1+v} n}}}.
\end{equation}
We recall that $\tau(n)\geq \sigma(2)^{L+1}$ and realize that the proof is finished.
\end{proof}

The bounds of Theorem~\ref{preasymptoticstheorem} are very explicit, but complex.
One might be bothered by the dependence of the exponent in the lower bound on $n$.
This can be overcome, if we restrict the lower bound to the case $n \leq (1+v)^{d^a}$ for some $0<a<1$
and replace $\beta(d,n)$ by 
\begin{equation}
 \tilde{\beta}(d) = \frac{\log \sigma(2)^{-1}}{\log \brackets{1+v\cdot d^{1-a}}}.
\end{equation}
Of course, we throw away information this way.
Similarly, we get a worse but still valid estimate, if we replace $v$ by one.
Note that these lower bounds are valid for any zero sequence $\sigma$,
independent of its rate of convergence.

The constants 1, $\sigma(2)$ and $\tilde C(\delta)$ are independent of the power $d$.
The additional parameter $\delta$ in the upper bound was introduced to
maximize the exponent $\alpha(d,\delta)$. 
If $\delta$ tends to zero, $\alpha(d,\delta)$ gets bigger, but also the constant $\tilde C(\delta)$ explodes.

For large values of $d$ and if $n$ is significantly smaller than $(1+v)^d$,
the exponents in both the upper and the lower bound are close to $t_d=\frac{\log \brackets{\sigma(2)/\sigma(1)}^{-1}}{\log d}$.
In other words, the sequence $\tau$ preasymptotically roughly decays like $n^{-t_d}$.

These kinds of estimates are also closely related to those in \cite[Section 3]{gw}.
Using the language of generalized tractability, Gnewuch and Woźniakowski show that the supremum of all
$p>0$ such that there is a constant $C>0$ with
\begin{equation}
 \tau(n)\leq e\cdot \brackets{C/n}^{\frac{p}{1+\log d}}
\end{equation}
for all $n\in\IN$ and $d\in\IN$ is $\min\set{s,\log \sigma(2)^{-1}}$.

\section{Applications to some Tensor Power Operators}
\label{applicationssection}

Let $X$ and $Y$ be Hilbert spaces and let $T:X\to Y$ be a compact linear operator.
The $n$th approximation number of $T$ is the quantity
\begin{equation}
\label{appnumbersdefinition}
 a_n(T) = \inf\limits_{\rank (A) < n} \norm{T-A}.
\end{equation}
It measures the power of approximating $T$ in $\mathcal{L}\brackets{X,Y}$ by operators of rank less than $n$.
Obviously, the first approximation number of $T$ coincides with its norm.
Since $W=T^*T\in\mathcal{L}(X)$ is positive semi-definite and compact,
it admits a finite or countable orthonormal basis $\mathcal{B}$ of $N(T)^\perp$ consisting of eigenvectors
$b\in\mathcal{B}$ to eigenvalues
\begin{equation}
 \lambda(b) = \scalar{Wb}{b}_X= \norm{T b}_Y^2 > 0
.\end{equation}
I will refer to $\mathcal{B}$ as the orthonormal basis associated with $T$.
It can be characterized as the orthonormal basis of $N(T)^\perp$ whose image is an orthogonal basis of $\overline{R(T)}$.
It is unique up to the choice of orthonormal bases in the finite-dimensional eigenspaces of $W$.
Clearly,
\begin{equation}
 Tf = \sum_{b\in\mathcal{B}} \scalar{f}{b}_{X} Tb \quad\text{for } f\in X.
\end{equation}
The square-roots of the eigenvalues of $W$ are called singular values of $T$.
Let $\sigma(n)$ be the $n$th largest singular value of $T$, provided $n\leq\abs{\mathcal{B}}$. Else, let $\sigma(n)=0$.
The algorithm
\begin{equation}
 A_n f= \sum_{b\in\mathcal{B}_n} \scalar{f}{b}_{X} Tb \quad\text{for } f\in X
\end{equation}
is an optimal approximation of $T$ by operators of rank less than $n$,
if $\mathcal{B}_n$ consists of all $b\in\mathcal{B}$ with $\norm{T b}_Y>\sigma(n)$.
In particular, $a_n(T)$ and $\sigma(n)$ coincide and
\begin{equation}
\label{minmax}
 a_n(T) = \min\limits_{\substack{V\subseteq X\\ \dim(V)\leq n-1}}\, \max\limits_{\substack{f\perp V\\ \norm{f}_X=1}} \norm{Tf}_Y.
\end{equation}

We are concerned with the approximation numbers of tensor power operators, defined as follows.
Let $G$ be a set and $G^d$ be its $d$-fold Cartesian product and let $\IK\in\set{\IR,\IC}$.
The tensor product of $\IK$-valued functions $f_1,\hdots,f_d$ on $G$ is the function
\begin{equation}
 f_1\otimes\hdots\otimes f_d:\quad G^d \to \IK, \quad x \mapsto f_1(x_1)\cdot\hdots\cdot f_d(x_d).
\end{equation}
If $X$ is a Hilbert space of $\IK$-valued functions on $G$,
its $d$th tensor power $X^d$ is the smallest Hilbert space of $\IK$-valued functions on $G^d$
that contains any tensor product of functions in $X$ and satisfies
\begin{equation}
 \scalar{f_1\otimes\hdots\otimes f_d}{g_1\otimes\hdots\otimes g_d}
 = \scalar{f_1}{g_1}\cdot\hdots\cdot\scalar{f_d}{g_d}
\end{equation}
for any choice of functions $f_1,\hdots,f_d$ and $g_1,\hdots,g_d$ in $X$.
Let $Y$ be another Hilbert space of $\IK$-valued functions and let $T\in\mathcal{L}(X,Y)$.
The $d$th tensor power of $T$ is the unique operator $T^d\in\mathcal{L}(X^d,Y^d)$ that satisfies
\begin{equation}
 T^d\brackets{f_1\otimes\hdots\otimes f_d} = Tf_1\otimes\hdots\otimes Tf_d
\end{equation}
for any choice of functions $f_1,\hdots,f_d$ in $X$. If $T$ is compact, then so is $T^d$.
Moreover, if $\mathcal{B}$ is the orthonormal basis associated with $T$, then
\begin{equation}
 \mathcal{B}^d = \set{b_1\otimes\hdots\otimes b_d \mid b_1,\hdots,b_d\in\mathcal{B}}
\end{equation}
is the orthonormal basis associated with $T^d$.
In particular, the singular values of $T^d$ are given as the $d$-fold products of singular values of $T$.
The sequence of approximation numbers $a_n\brackets{T^d}$ is hence given as the nonincreasing rearrangement of the $d$th
tensor power of the sequence $\sigma$ of singular values of $T$.

\subsection{Approximation of Mixed Order Sobolev Functions on the Torus}
\label{periodicsection}

Let $\mathbb{T}$ be the 1-torus, the circle, represented by the interval $[a,b]$,
where the two end points $a<b$ are identified.
By $L_2\brackets{\mathbb{T}}$, we denote the Hilbert space of square-integrable functions on $\mathbb{T}$,
equipped with the scalar product
\begin{equation}
 \scalar{f}{g}= \frac{1}{L} \int_{\mathbb{T}} f(x) \overline{g(x)} ~\d x
\end{equation}
and the induced norm $\norm{\cdot}$ for some $L>0$.
Typical normalizations are $[a,b]\in\set{[0,1],[-1,1],[0,2\pi]}$ and $L\in\set{1,b-a}$.
The family $\brackets{b_{k}}_{k\in \IZ}$ with
\begin{equation}
 b_{k}(x) = \sqrt{\frac{L}{b-a}} \exp\brackets{2\pi i k\,\frac{x-a}{b-a}}
\end{equation}
is an orthonormal basis of $L_2\brackets{\mathbb{T}}$, its Fourier basis, and
\begin{equation}
 \hat{f}(k) = \scalar{f}{b_{k}}
\end{equation}
is the $k$th Fourier coefficient of $f\in L_2\brackets{\mathbb{T}}$. By Parseval's identity,
\begin{equation}
 \norm{f}^2 = \sum_{k\in\IZ} |\hat{f}(k)|^2 \quad \text{and} \quad \scalar{f}{g}=\sum_{k\in\IZ} \hat{f}(k)\cdot\overline{\hat{g}(k)}.
\end{equation}
Let $w=\brackets{w_k}_{k\in\IN}$ be a nondecreasing sequence of real numbers with $w_0=1$
and let $w_{-k}=w_k$ for $k\in\IN$ and so let $\tilde w$. The univariate Sobolev space $H^{w}\brackets{\mathbb{T}}$
is the Hilbert space of functions $f\in L_2\brackets{\mathbb{T}}$ for which
\begin{equation}
 \norm{f}_w^2=\sum_{k\in\IZ} w_k^2\cdot |\hat{f}(k)|^2
\end{equation}
is finite, equipped with the scalar product
\begin{equation}
 \scalar{f}{g}_w= \sum_{k\in\IZ} w_k\hat{f}(k)\cdot\overline{w_k\hat{g}(k)}.
\end{equation}
Note that $H^{w}\brackets{\mathbb{T}}$ and $H^{\tilde{w}}\brackets{\mathbb{T}}$ coincide and their norms are equivalent,
if and only if $w \sim \tilde{w}$. In case $w_k \sim k^s$ for some $s\geq 0$, the space $H^{w}\brackets{\mathbb{T}}$ is the classical
Sobolev space of periodic univariate functions with fractional smoothness $s$, also denoted by $H^s \brackets{\mathbb{T}}$.
In particular, $H^w\brackets{\mathbb{T}}=L_2\brackets{\mathbb{T}}$ for $w\equiv 1$.

In accordance with previous notation, let $X=H^{w}\brackets{\mathbb{T}}$ and $Y=H^{\tilde w}\brackets{\mathbb{T}}$.
The embedding $T$ of $X$ into $Y$ is compact, if and only if $w_k / \tilde w_k$ tends to infinity as $k$ tends to infinity.
The Fourier basis $\brackets{b_{k}}_{k\in \IZ}$ is an orthogonal basis of $X$ consisting of eigenfunctions of $W=T^* T$
with corresponding eigenvalues
\begin{equation}
 \lambda(b_k) = \frac{\norm{b_k}_Y^2}{\norm{b_k}_X^2} = \frac{\tilde w_k^2}{w_k^2}.
\end{equation}
The $n$th approximation number $\sigma(n)$ of this embedding is the square root of the $n$th biggest eigenvalue.
Hence, replacing the Fourier weight sequences $w$ and $\tilde{w}$
by equivalent sequences does not affect the order of convergence of
the corresponding approximation numbers,
but it may drastically affect their asymptotic constants and preasymptotic behavior.
If $Y=L_2\brackets{\mathbb{T}}$, we obtain
\begin{equation}
 \sigma(n) = w_{k_n}^{-1}, \quad \text{where} \quad k_n=(-1)^n\left\lfloor n/2\right\rfloor.
\end{equation}
Note that $\sigma(1)$, the norm of the embedding $T$, is always one.

The $d$th tensor power $X^d=H^w_{\rm mix}\brackets{\mathbb{T}^d}$ of $X$ is a space of mixed order Sobolev
functions on the $d$-torus. If $w_k\sim k^s$ for some $s\geq 0$, this is the space $H^s_{\rm mix}\brackets{\mathbb{T}^d}$ of functions
with dominating mixed smoothness $s$. If even $s\in\IN_0$, this space consists of
all real-valued functions on the $d$-torus, which have a weak (or distributional) derivative of order $\alpha$
in $L_2\brackets{\mathbb{T}^d}$ for any $\alpha\in\set{0,1,\hdots,s}^d$. Of course, the same holds for the $d$th
tensor power $Y^d=H^{\tilde w}_{\rm mix}\brackets{\mathbb{T}^d}$ of $Y$.
The tensor power operator $T^d: X^d \to Y^d$ is the compact embedding of $H^w_{\rm mix}\brackets{\mathbb{T}^d}$
into $H^{\tilde w}_{\rm mix}\brackets{\mathbb{T}^d}$.
Hence, the approximation numbers of this embedding
are the nonincreasing rearrangement of the $d$th tensor power of $\sigma$.

If $\brackets{\tilde w_k/w_k}_{k\in\IN}$ is of polynomial decay, Theorem~\ref{asymptotic theorem}
and Theorem~\ref{preasymptoticstheorem} apply.
We formulate the results for the embedding of $H^s_{\rm mix}\brackets{\mathbb{T}^d}$
into $L_2\brackets{\mathbb{T}^d}$, where $H^s_{\rm mix}\brackets{\mathbb{T}^d}$ will be equipped with different equivalent norms,
indicated by the notation
\begin{equation}
 \begin{split}
  &H^{s,\circ,\gamma}_{\rm mix}\brackets{\mathbb{T}^d},	\quad \text{if} \quad w_k=\brackets{\sum_{l=0}^s \abs{\gamma^{-1} \frac{2\pi k}{b-a}}^{2l}}^{1/2},\\
  &H^{s,*,\gamma}_{\rm mix}\brackets{\mathbb{T}^d},	\quad \text{if} \quad w_k=\brackets{1+\abs{\gamma^{-1} \frac{2\pi k}{b-a}}^{2s}}^{1/2},\\
  &H^{s,+,\gamma}_{\rm mix}\brackets{\mathbb{T}^d},	\quad \text{if} \quad w_k=\brackets{1+\abs{\gamma^{-1} \frac{2\pi k}{b-a}}^{2}}^{s/2},\\
  &H^{s,\#,\gamma}_{\rm mix}\brackets{\mathbb{T}^d},	\quad \text{if} \quad w_k=\brackets{1+\abs{\gamma^{-1} \frac{2\pi k}{b-a}}}^{s},
 \end{split}
\end{equation}
for some $\gamma >0$. The last three norms are due to Kühn, Sickel and Ullrich \cite{ksu}, who study all these norms for $\gamma=1$, $L=1$ and $[a,b]=[0,2\pi]$.
The last norm is also studied by Chernov and D\~ung in \cite{cd} for $L=2\pi$, $[a,b]=[-\pi,\pi]$ and arbitrary values of $\gamma$.
If $s$ is a natural number, the first two scalar products take the form
\begin{equation}
\begin{split}
 &\scalar{f}{g}_{H^{s,\circ,\gamma}_{\rm mix}} = \sum\limits_{\alpha\in\set{0,\hdots,s}^d} \gamma^{-2s\abs{\alpha}} \scalar{\diff^\alpha f}{\diff^\alpha g},\\
 &\scalar{f}{g}_{H^{s,*,\gamma}_{\rm mix}} = \sum\limits_{\alpha\in\set{0,s}^d} \gamma^{-2s\abs{\alpha}} \scalar{\diff^\alpha f}{\diff^\alpha g}.
\end{split}
\end{equation}
This is why $H^{s,\circ,1}_{\rm mix}\brackets{\mathbb{T}^d}$ and $H^{s,*,1}_{\rm mix}\brackets{\mathbb{T}^d}$ might be considered the most natural choice.
Note that the corresponding approximation numbers of the embedding $T^d$ are independent of the normalization constant $L$,
but they do depend on the length of the interval $[a,b]$.

\begin{cor}
 The following limits exist and coincide:
 \begin{equation*}
  \left.\begin{array}{l}
        \lim\limits_{n\to\infty} a_n\brackets{H^{s,\circ,\gamma}_{\rm mix}\brackets{\mathbb{T}^d}\hookrightarrow L_2\brackets{\mathbb{T}^d}}\cdot n^s \brackets{\log n}^{-s(d-1)}\\
        \lim\limits_{n\to\infty} a_n\brackets{H^{s,*,\gamma}_{\rm mix}\brackets{\mathbb{T}^d}\hookrightarrow L_2\brackets{\mathbb{T}^d}}\cdot n^s \brackets{\log n}^{-s(d-1)}\\
	\lim\limits_{n\to\infty} a_n\brackets{H^{s,+,\gamma}_{\rm mix}\brackets{\mathbb{T}^d}\hookrightarrow L_2\brackets{\mathbb{T}^d}}\cdot n^s \brackets{\log n}^{-s(d-1)}\\
	\lim\limits_{n\to\infty} a_n\brackets{H^{s,\#,\gamma}_{\rm mix}\brackets{\mathbb{T}^d}\hookrightarrow L_2\brackets{\mathbb{T}^d}}\cdot n^s \brackets{\log n}^{-s(d-1)}
       \end{array}\right\}
  =\brackets{\frac{\brackets{\gamma\, \frac{b-a}{\pi}}^d}{\brackets{d-1}!}}^s
 .\end{equation*}
\end{cor}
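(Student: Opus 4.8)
The plan is to reduce all four embeddings to a single application of Theorem~\ref{asymptotic theorem}. Fix one of the four normalizations, write $w=(w_k)_{k\in\IN}$ for the corresponding univariate Fourier weight sequence, and let $\sigma(n)=w_{k_n}^{-1}$ with $k_n=(-1)^n\lfloor n/2\rfloor$ be the univariate approximation number sequence of the embedding $H^{w}(\mathbb{T})\hookrightarrow L_2(\mathbb{T})$, as computed in Section~\ref{periodicsection}. The approximation numbers of the $d$-dimensional embedding are then exactly the nonincreasing rearrangement $\tau$ of the $d$th tensor power of $\sigma$, so by the $\simeq$-consequence of Theorem~\ref{asymptotic theorem} it suffices to show that in each of the four cases
\begin{equation*}
 \lim_{n\to\infty} \sigma(n)\, n^{s} = \brackets{\gamma\,\tfrac{b-a}{2\pi}}^{s},
\end{equation*}
because then $\sigma(n)\simeq c\,n^{-s}$ with $c=\brackets{\gamma\,\frac{b-a}{2\pi}}^{s}$, and Theorem~\ref{asymptotic theorem} gives $\tau(n)\simeq \frac{c^d}{(d-1)!^s}\,n^{-s}(\log n)^{s(d-1)}$, i.e. the limit equals $\brackets{\frac{(\gamma(b-a)/\pi)^d}{(d-1)!}}^{s}$ after noting $c^d/(d-1)!^s=\brackets{(\gamma(b-a)/\pi)^{ds}/ \cdots}$; one rewrites $c^d = \brackets{\gamma\frac{b-a}{2\pi}}^{ds}$ and absorbs the factor $2^{ds}$ into the stated form — actually the clean bookkeeping is $c^d/(d-1)!^{s}=\brackets{(\gamma(b-a)/(2\pi))^{d}}^{s}/(d-1)!^{s}$, and since the target is $\brackets{(\gamma(b-a)/\pi)^{d}/(d-1)!}^{s}$ I should use instead $c=\brackets{\gamma\frac{b-a}{\pi}}^{s}/2^{?}$; the correct value to verify is simply $\lim \sigma(n)n^{s}=\brackets{\gamma\frac{b-a}{2\pi}}^{s}\cdot 2^{s}=\brackets{\gamma\frac{b-a}{\pi}}^{s}$, which is what I establish below.

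The computation of $\lim_{n\to\infty}\sigma(n)n^{s}$ is the heart of the argument and is the same in spirit for all four weights. From $k_n=(-1)^n\lfloor n/2\rfloor$ we have $|k_n|=\lfloor n/2\rfloor\sim n/2$, hence $\sigma(n)=w_{|k_n|}^{-1}$. Each of the four weights satisfies $w_k\sim \abs{\gamma^{-1}\tfrac{2\pi k}{b-a}}^{s}$ as $k\to\infty$: for the $\circ$-weight the sum $\sum_{l=0}^{s}\abs{\gamma^{-1}\frac{2\pi k}{b-a}}^{2l}$ is dominated by its top term $\abs{\gamma^{-1}\frac{2\pi k}{b-a}}^{2s}$, whose square root is $\abs{\gamma^{-1}\frac{2\pi k}{b-a}}^{s}$; for the $*$-weight $\brackets{1+\abs{\gamma^{-1}\frac{2\pi k}{b-a}}^{2s}}^{1/2}\sim \abs{\gamma^{-1}\frac{2\pi k}{b-a}}^{s}$; similarly for $+$ and $\#$. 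Therefore
\begin{equation*}
 \sigma(n)\, n^{s} = \frac{n^{s}}{w_{|k_n|}} \sim \frac{n^{s}}{\abs{\gamma^{-1}\tfrac{2\pi}{b-a}}^{s}\,\lfloor n/2\rfloor^{s}} \longrightarrow \brackets{\gamma\,\tfrac{b-a}{2\pi}}^{s}\cdot 2^{s} = \brackets{\gamma\,\tfrac{b-a}{\pi}}^{s},
\end{equation*}
which is the claimed constant $c=\brackets{\gamma\frac{b-a}{\pi}}^{s}$; plugging into Theorem~\ref{asymptotic theorem} yields $c^{d}/(d-1)!^{s}=\brackets{(\gamma(b-a)/\pi)^{d}/(d-1)!}^{s}$, exactly the right-hand side of the corollary, and this value is manifestly independent of which of the four norms was chosen, proving that the four limits exist and coincide.

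The only genuine obstacle is verifying that each of the four explicit weight families is asymptotically equal (not merely comparable) to $\abs{\gamma^{-1}\tfrac{2\pi k}{b-a}}^{s}$; this is a routine dominated-term estimate but must be checked case by case, paying attention to the different placements of the exponent ($2l$ inside a sum, $2s$ then square root, $2$ then $s/2$ power, and $1$ then $s$ power). Once that is done, the substitution $|k_n|\sim n/2$ supplies the factor $2^{s}$ that converts $\gamma\frac{b-a}{2\pi}$ into $\gamma\frac{b-a}{\pi}$, and the rest is a direct appeal to Theorem~\ref{asymptotic theorem}. A remark that the approximation numbers depend only on $b-a$ (through the weights) and not on the normalization constant $L$ — already observed in Section~\ref{periodicsection} — explains why no $L$ appears in the statement.
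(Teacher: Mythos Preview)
Your approach is correct and is exactly the one implicit in the paper: the corollary is an immediate application of Theorem~\ref{asymptotic theorem} once one checks that, for each of the four weight families, the univariate approximation numbers satisfy $\sigma(n)\,n^{s}\to\brackets{\gamma\,\frac{b-a}{\pi}}^{s}$, which follows from $w_k\sim\abs{\gamma^{-1}\frac{2\pi k}{b-a}}^{s}$ and $\abs{k_n}\sim n/2$. The only remark is that your first paragraph contains a good deal of visible self-correction about the factor~$2^{s}$; the mathematics is fine, but in a final write-up you should delete that scratchwork and state directly that $c=\brackets{\gamma\,\frac{b-a}{\pi}}^{s}$, as you do cleanly in the second paragraph.
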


Of course, this coincides with the limits computed in \cite{ksu}, if $\gamma^{-1} \frac{b-a}{\pi}=2$.
The third limit (for $[a,b]=[-\pi,\pi]$, $L=2\pi$ and any $\gamma>0$) may not be written down explicitly in \cite{cd},
but can be derived from their Theorem 4.6.

\begin{cor}
 Let $\square\in\set{\circ,*,+,\#}$. For any $s>0$, $d\in\IN$ and $n\in\set{2,\hdots,3^d}$,
 \begin{align*}
  \sigma_\square(2) \brackets{\frac{1}{n}}^{\beta_\square(d,n)} \leq
  a_n\brackets{H^{s,\square,\gamma}_{\rm mix}\brackets{\mathbb{T}^d}\hookrightarrow L_2\brackets{\mathbb{T}^d}}
  \leq\, &\brackets{\frac{\tilde{C}(\delta)}{n}}^{\alpha_\square(d,\delta)}.
 \end{align*}
 The parameter $\delta\in (0,1]$ is arbitrary, $\tilde{C}(\delta) = \exp\brackets{\brackets{3/\eta}^{1+\delta}/\delta}$
 for $\eta=\frac{2\pi}{\gamma(b-a)}$
 and the values $\sigma_\square$, $\alpha_\square$
 and $\beta_\square$ are listed below.
 The upper bound holds for all $n\in\IN$.
\end{cor}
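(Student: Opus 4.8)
The plan is to read this off from Theorem~\ref{preasymptoticstheorem} applied to the univariate singular value sequence of each embedding $H^w(\mathbb{T})\hookrightarrow L_2(\mathbb{T})$, with $w=w^{(\square)}$ the Fourier weight attached to $\square$. Recall from Section~\ref{periodicsection} that, writing $\sigma_\square(n)=1/w^{(\square)}_{k_n}$ with $k_n=(-1)^n\lfloor n/2\rfloor$, the approximation numbers of the tensor power embedding $H^{s,\square,\gamma}_{\rm mix}(\mathbb{T}^d)\hookrightarrow L_2(\mathbb{T}^d)$ are the nonincreasing rearrangement $\tau$ of the $d$th tensor power of $\sigma_\square$. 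So it remains to check the hypotheses of Theorem~\ref{preasymptoticstheorem} for $\sigma_\square$ and to substitute.

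First I would record the univariate data. Since $w^{(\square)}_0=1$ we have $\sigma_\square(1)=1$, and $k_2=1$ gives $\sigma_\square(2)=1/w^{(\square)}_1$; with $\eta=\frac{2\pi}{\gamma(b-a)}$ this is the value listed in the table (for instance $\sigma_\#(2)=(1+\eta)^{-s}$, $\sigma_+(2)=(1+\eta^2)^{-s/2}$, and so on). Because $\eta>0$ and $s>0$ each $w^{(\square)}_1>1$, hence $1=\sigma_\square(1)>\sigma_\square(2)>0$ as required. Since $w^{(\square)}_k$ is strictly increasing in $\abs k$ for $\abs k\geq 1$ while $k_2=1$, $k_3=-1$, $k_4=2$, the multiplicity from Theorem~\ref{preasymptoticstheorem}(ii) is $v=\#\set{n\geq 2\mid\sigma_\square(n)=\sigma_\square(2)}=2$, so $(1+v)^d=3^d$ — exactly the admissible range of $n$ in the lower bound.

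Next I would establish the polynomial decay $\sigma_\square(n)\leq C\,n^{-s}$ for $n\geq 2$ with the uniform constant $C=(3/\eta)^s$. The two elementary ingredients are $\abs{k_n}=\lfloor n/2\rfloor\geq n/3$ for every $n\geq 2$, and $w^{(\square)}_k\geq\abs{\eta k}^s$ for each $\square$ (keep only the top-order term in the defining expression for $w^{(\square)}_k$). Together they give $\sigma_\square(n)=1/w^{(\square)}_{k_n}\leq\abs{\eta k_n}^{-s}\leq(\eta n/3)^{-s}=(3/\eta)^s n^{-s}$. For this $C$ one has $C^{(1+\delta)/s}=(3/\eta)^{1+\delta}$, so the constant in Theorem~\ref{preasymptoticstheorem}(i) becomes $\tilde C(\delta)=\exp\brackets{C^{(1+\delta)/s}/\delta}=\exp\brackets{(3/\eta)^{1+\delta}/\delta}$, as claimed.

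With all hypotheses verified, the corollary follows by substitution: Theorem~\ref{preasymptoticstheorem}(i) gives the upper bound $\tau(n)\leq\brackets{\tilde C(\delta)/n}^{\alpha(d,\delta)}$ for \emph{all} $n\in\IN$ with exponent $\alpha(d,\delta)=\frac{\log\sigma_\square(2)^{-1}}{\log\brackets{\sigma_\square(2)^{-(1+\delta)/s}d}}$, the tabulated $\alpha_\square(d,\delta)$; Theorem~\ref{preasymptoticstheorem}(ii) gives the lower bound $\tau(n)\geq\sigma_\square(2)\brackets{1/n}^{\beta(d,n)}$ for $n\in\set{2,\dots,3^d}$ with $\beta(d,n)=\frac{\log\sigma_\square(2)^{-1}}{\log\brackets{1+\frac{2d}{\log_3 n}}}$, the tabulated $\beta_\square(d,n)$. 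The main obstacle is the previous paragraph: one must ensure the bound $w^{(\square)}_k\geq\abs{\eta k}^s$ and the resulting constant $C=(3/\eta)^s$ are genuinely uniform over the four weights, so that a single $\tilde C(\delta)$ serves all of them — which holds because every one of the four weights dominates its leading monomial. Everything else is a direct reading of Theorem~\ref{preasymptoticstheorem}.
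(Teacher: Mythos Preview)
Your proof is correct and is exactly the argument the paper intends: the corollary is presented without its own proof precisely because it is obtained by plugging the univariate data $\sigma_\square(1)=1$, $\sigma_\square(2)=1/w_1^{(\square)}$, $v=2$, and the decay bound $\sigma_\square(n)\leq (3/\eta)^s n^{-s}$ into Theorem~\ref{preasymptoticstheorem}. Your verification of $v=2$ via $k_2=1$, $k_3=-1$, $k_4=2$ and of the uniform constant $C=(3/\eta)^s$ via $\lfloor n/2\rfloor\geq n/3$ and $w_k^{(\square)}\geq |\eta k|^s$ fills in precisely the details the paper leaves to the reader.
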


\setlength{\extrarowheight}{5pt}
\begin{center}
\begin{tabular}[h]{c|c|c|c}
 $\square$	& $\sigma_\square(2)$	& $\alpha_\square(d,\delta)$		& $\beta_\square(d,n)$		\\
 \hline
 $\circ$	& $\brackets{\sum_{l=0}^s \eta^{2l}}^{-\frac{1}{2}}$
		& $\frac{\frac{1}{2} \log\brackets{\sum_{l=0}^s\eta^{2l}}}{\log d + \frac{1+\delta}{2s}\cdot\log\brackets{\sum_{l=0}^s\eta^{2l}}}$
		& $\frac{\frac{1}{2} \log\brackets{\sum_{l=0}^s\eta^{2l}}}{\log\brackets{1+\frac{2}{\log_3 n}\,d}}$		\\
  \hline
 $*$		& $\brackets{1+\eta^{2s}}^{-\frac{1}{2}}$
		& $\frac{\frac{1}{2} \log\brackets{1+\eta^{2s}}}{\log d + \frac{1+\delta}{2s}\cdot\log\brackets{1+\eta^{2s}}}$
		& $\frac{\frac{1}{2} \log\brackets{1+\eta^{2s}}}{\log\brackets{1+\frac{2}{\log_3 n}\,d}}$		\\
  \hline
 $+$		& $\brackets{1+\eta^{2}}^{-\frac{s}{2}}$	
		& $\frac{\frac{s}{2} \log\brackets{1+\eta^{2}}}{\log d + \frac{1+\delta}{2}\cdot\log\brackets{1+\eta^{2}}}$		
		& $\frac{\frac{s}{2} \log\brackets{1+\eta^{2}}}{\log\brackets{1+\frac{2}{\log_3 n}\,d}}$		\\
  \hline
 $\#$		& $\brackets{1+\eta}^{-s}$	
		& $\frac{s \log\brackets{1+\eta}}{\log d + (1+\delta)\log\brackets{1+\eta}}$		
		& $\frac{s \log\brackets{1+\eta}}{\log\brackets{1+\frac{2}{\log_3 n}\,d}}$
\end{tabular}
\end{center}

Let us consider the setting of \cite{ksu}, where $\gamma=1$ and $b-a=2\pi$ and hence $\eta$ is one.
The exponents $\alpha_\#(d,\delta)=\frac{s}{\log_2 d + 1+\delta}$ and
$\alpha_+(d,\delta)=\frac{s}{2\log_2 d + 1+\delta}$ in our upper bounds are slightly better
than the exponents $\frac{s}{\log_2 d + 2}$ and $\frac{s}{2\log_2 d + 4}$ in Theorem 4.9, 4.10 and Theorem 4.17 of \cite{ksu},
but almost the same. Also the lower bounds basically coincide.
Regarding $H^{s,*,1}_{\rm mix}\brackets{\mathbb{T}^d}$, Kühn, Sickel and Ullrich only studied the case $1/2\leq s\leq1$ in Theorem 4.20.
As we see now, there is a major difference between this natural norm and the last two norms:
For large dimensions $d$, the preasymptotic behavior of the approximation numbers is roughly $n^{-t_{d,\square}}$, where
\begin{equation}
 t_{d,\circ}=\frac{\log\brackets{s+1}}{2 \log d}, \quad
 t_{d,*}=\frac{1}{2 \log_2 d}, \quad
 t_{d,+}=\frac{s}{2 \log_2 d}, \quad
 t_{d,\#}=\frac{s}{\log_2 d}.
\end{equation}
This means that the smoothness of the space only has a minor or even no impact on the preasymptotic decay of the approximation numbers,
if $H^{s}_{\rm mix}\brackets{\mathbb{T}^d}$ is equipped with one of the natural norms
$\norm{\cdot}_{H^{s,\circ,1}_{\rm mix}}$ or $\norm{\cdot}_{H^{s,*,1}_{\rm mix}}$.

This changes, however, if the value of $\eta=\frac{2\pi}{\gamma (b-a)}$ changes. If $\eta$ is larger than one,
because we consider a shorter interval $[a,b]$ or because we put some weight $\gamma<\frac{2\pi}{b-a}$,
also the exponents $t_{d,\circ}$ and $t_{d,*}$ get linear in $s$.
For the other two families of norms, the smoothness does show and the value of $\eta$ is less important.

There are no preasymptotic estimates in \cite{cd}.

\subsection{Approximation of Mixed Order Jacobi Functions on the Cube}

The above results also apply to the approximation numbers of the embedding
of mixed order Jacobi functions on the $d$-cube in the corresponding $L_2$-space
as considered in \cite[Section 5]{cd}.

Let $\mathbb{I}$ be the 1-cube, a line segment, represented by $[-1,1]$.
For fixed parameters $\alpha,\beta>-1$ with $a:=\frac{\alpha+\beta+1}{2}>0$,
the weighted $L_2$-space $Y=L_2\brackets{\mathbb{I},w}$ is the Hilbert space of measurable,
real-valued functions on $\mathbb{I}$ with
\begin{equation}
 \int_{\mathbb{I}} f(x)^2 w(x) ~\d x < \infty,
\end{equation}
equipped with the scalar product
\begin{equation}
 \scalar{f}{g} = \int_{\mathbb{I}} f(x) g(x) w(x) ~\d x
\end{equation}
and the induced norm $\norm{\cdot}$, where $w:\mathbb{I}\to \IR$ is the Jacobi weight
\begin{equation}
 w(x)=(1-x)^\alpha (1+x)^\beta.
\end{equation}
This reduces to the classical space of square-integrable functions, if both parameters are zero.
As $\alpha$ respectively $\beta$ increases, the space grows, since we allow for
stronger singularities on the right respectively left endpoint, and vice versa.

The family of Jacobi polynomials $\brackets{P_k}_{k\in\IN_0}$ is an orthogonal basis of $Y$.
These polynomials can be defined as the unique solutions of the differential equations
\begin{equation}
 \mathcal{L} P_k= k(k+2a) P_k
\end{equation}
for the second order differential operator
\begin{equation}
 \mathcal{L} = - w(x)^{-1} \frac{d}{dx}\brackets{\brackets{1-x^2}w(x)\frac{d}{dx}}
\end{equation}
that satisfy
\begin{equation}
 P_k(1)=\binom{k+\alpha}{k}
 \quad\text{and}\quad
 P_k(-1)=(-1)^k \binom{k+\beta}{k}
.\end{equation}
We denote the $k$th Fourier coefficient of $f$ with respect to the normalized Jacobi basis by $f_k$.
The scalar product in $Y$ hence admits the representation
\begin{equation}
 \scalar{f}{g}=\sum\limits_{k=0}^\infty f_k g_k.
\end{equation}
For $s>0$ let $X=K^s\brackets{\mathbb{I},w}$ be the Hilbert space of functions $f\in Y$ with
\begin{equation}
 \sum\limits_{k=0}^\infty \brackets{1+a^{-1} k}^{2s} f_k^2 < \infty,
\end{equation}
equipped with the scalar product
\begin{equation}
 \scalar{f}{g}_s = \sum\limits_{k=0}^\infty \brackets{1+a^{-1} k}^{2s} f_k g_k
\end{equation}
and the induced norm $\norm{\cdot}_s$.
Obviously, $\brackets{P_k}_{k\in\IN_0}$ is an orthogonal basis of $X$, too.
In case $s$ is an even integer, this is the space of all functions $f\in L_2\brackets{\mathbb{I},w}$
such that $\mathcal{L}^j f\in L_2\brackets{\mathbb{I},w}$ for $j=1\hdots \frac{s}{2}$ and the scalar product
\begin{equation}
 \scalar{f}{g}_{s,*} = \sum\limits_{j=0}^{s/2} \scalar{\mathcal{L}^j f}{\mathcal{L}^j g}
\end{equation}
is equivalent to the one above.
The parameter $s$ can hence be interpreted as smoothness of the functions in $K^s\brackets{\mathbb{I},w}$.
The embedding $T$ of $X$ into $Y$ is compact
and its $n$th approximation number is given by
\begin{equation}
 \sigma(n)=a_n(T)=\frac{\norm{P_{n-1}}}{\norm{P_{n-1}}_s}=\brackets{1+a^{-1} \brackets{n-1}}^{-s}.
\end{equation}

We can apply our theorems to study the approximation numbers of the $d$th tensor power $T^d$ of $T$.
This is the embedding of $X^d=K^s\brackets{\mathbb{I}^d,w_d}$ into $Y^d=L_2\brackets{\mathbb{I}^d,w_d}$,
where $Y^d$ is the weighted $L_2$-space on the $d$-cube with respect to the
Jacobi weight $w_d=w\otimes\hdots\otimes w$ 
and $X^d$ is the subspace of Jacobi functions of mixed order $s$.
Like in the univariate case, $X^d$ can be described via differentials
of dominating mixed order $s$ and less, if $s$ is an even integer.

\begin{cor} For any $d\in\IN$ and $s>0$, the following limit exists:
\begin{equation*}
 \lim\limits_{n\to\infty} a_n\brackets{K^s\brackets{\mathbb{I}^d,w_d} \hookrightarrow L_2\brackets{\mathbb{I}^d,w_d}} \cdot n^s \brackets{\log n}^{-s(d-1)}
  =\brackets{\frac{a^d}{\brackets{d-1}!}}^s
.\end{equation*}
\end{cor}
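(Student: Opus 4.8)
The plan is to obtain the corollary as a direct instance of Theorem~\ref{asymptotic theorem}, once the univariate data are made explicit. Recall from the preceding discussion that the embedding $T:K^s\brackets{\mathbb{I},w}\to L_2\brackets{\mathbb{I},w}$ has singular values $\sigma(n)=\brackets{1+a^{-1}(n-1)}^{-s}$ for $n\in\IN$, that $T$ is compact, and that by the general tensor power principle recorded at the end of the introduction to Section~\ref{applicationssection} the approximation numbers $a_n\brackets{T^d}$ of $T^d:K^s\brackets{\mathbb{I}^d,w_d}\hookrightarrow L_2\brackets{\mathbb{I}^d,w_d}$ are precisely the nonincreasing rearrangement $\tau$ of the $d$th tensor power of the sequence $\sigma$. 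So it suffices to pin down the polynomial asymptotics of $\sigma$ and feed them into Theorem~\ref{asymptotic theorem}.

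First I would record the elementary limit
\begin{equation*}
 \sigma(n)\,n^s=\brackets{\frac{a}{a+n-1}}^s n^s=\brackets{\frac{a\,n}{a+n-1}}^s\longrightarrow a^s\qquad(n\to\infty).
\end{equation*}
In the terminology of the introduction this says $\sigma(n)\simeq a^s\,n^{-s}$; in particular $\sigma$ is a nonincreasing zero sequence of polynomial decay, so the hypotheses of Theorem~\ref{asymptotic theorem} are satisfied with the constant $c=a^s$ and decay exponent $s$.

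Then I would invoke Theorem~\ref{asymptotic theorem}, more precisely its stated consequence that $\sigma(n)\simeq c\,n^{-s}$ forces $\tau(n)\simeq \frac{c^d}{{(d-1)!}^s}\,n^{-s}\brackets{\log n}^{s(d-1)}$. With $c=a^s$ this gives
\begin{equation*}
 a_n\brackets{T^d}=\tau(n)\simeq\frac{a^{sd}}{{(d-1)!}^s}\,n^{-s}\brackets{\log n}^{s(d-1)}=\brackets{\frac{a^d}{(d-1)!}}^s\,n^{-s}\brackets{\log n}^{s(d-1)},
\end{equation*}
which is exactly the claimed value of $\lim_{n\to\infty} a_n\brackets{K^s\brackets{\mathbb{I}^d,w_d}\hookrightarrow L_2\brackets{\mathbb{I}^d,w_d}}\cdot n^s\brackets{\log n}^{-s(d-1)}$. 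I do not expect a genuine obstacle here: everything structural (compactness of $T^d$, the tensor power description of its singular values, the space identifications) has already been set up, so the only things to verify carefully are the elementary limit $\sigma(n)n^s\to a^s$ displayed above and the trivial algebraic identity $a^{sd}/{(d-1)!}^s=\brackets{a^d/(d-1)!}^s$.
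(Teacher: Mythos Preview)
Your proposal is correct and is exactly the argument the paper intends: the corollary is stated without proof, immediately after the univariate singular values $\sigma(n)=\brackets{1+a^{-1}(n-1)}^{-s}$ are computed, and is meant to follow by plugging the limit $\sigma(n)n^s\to a^s$ into Theorem~\ref{asymptotic theorem}. There is nothing to add.
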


This result could also be derived from Theorem 5.5 in \cite{cd}.
In addition, we get the following preasymptotic estimates:

\begin{cor}
 For any $\delta\in (0,1]$, $s>0$, $d\in\IN$ and $n\in\set{2,\hdots,2^d}$,
 \begin{align*}
  &\brackets{\frac{a}{a+1}}^s \brackets{\frac{1}{n}}^{p_{s,a,d,n}}
  \leq
  a_n\brackets{K^s\brackets{\mathbb{I}^d,w_d} \hookrightarrow L_2\brackets{\mathbb{I}^d,w_d}}
  \leq\,
  \brackets{\frac{\exp\brackets{\frac{(2a)^{1+\delta}}{\delta}}}{n}}^{q_{s,a,d,\delta}}\\
  &\text{with} \quad
  p_{s,a,d,n}=\frac{s \log\frac{a+1}{a}}{\log\brackets{1+ \frac{d}{\log_2 n}}} \quad\text{and}\quad
  q_{s,a,d,\delta}=\frac{s \log\frac{a+1}{a}}{\log d + (1+\delta) \log\frac{a+1}{a}}.
 \end{align*}
 The upper bound even holds for all $n\in\IN$.
\end{cor}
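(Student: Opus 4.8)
The plan is to read both inequalities off Theorem~\ref{preasymptoticstheorem}, applied to the univariate singular value sequence of the Jacobi embedding. By the preceding discussion, the $n$th singular value of $T: K^s(\mathbb{I},w)\hookrightarrow L_2(\mathbb{I},w)$ is $\sigma(n)=\brackets{1+a^{-1}(n-1)}^{-s}$, and the approximation numbers $a_n\brackets{K^s(\mathbb{I}^d,w_d)\hookrightarrow L_2(\mathbb{I}^d,w_d)}=a_n(T^d)$ are exactly the nonincreasing rearrangement $\tau$ of the $d$th tensor power of $\sigma$. So it suffices to check the hypotheses of Theorem~\ref{preasymptoticstheorem} for this $\sigma$ and then specialize its formulas. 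Since $\sigma(1)=1$, the theorem applies in its stated form, without the rescaling of Theorem~\ref{preasymptoticstheorem1}.

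First I would verify the hypotheses. Since $a>0$, the sequence $\sigma$ is strictly decreasing and tends to $0$; moreover $\sigma(1)=1$ and $\sigma(2)=\brackets{1+a^{-1}}^{-s}=\brackets{\frac{a}{a+1}}^{s}$, so $1=\sigma(1)>\sigma(2)>0$ as required. Strict monotonicity also forces $v=\#\set{n\geq 2\mid\sigma(n)=\sigma(2)}=1$, hence $(1+v)^d=2^d$ and the range of validity in part~(ii) is precisely $\set{2,\dots,2^d}$. It remains to exhibit an admissible constant $C$ with $\sigma(n)\leq C\,n^{-s}$ for all $n\geq 2$: I claim $C=(2a)^s$ works. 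Indeed this is equivalent to $1+a^{-1}(n-1)\geq \frac{n}{2a}$, i.e. to $2a+2(n-1)\geq n$, i.e. to $n\geq 2-2a$, which holds for every $n\geq 2$ since $a>0$.

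Then I would substitute. Writing $\log\sigma(2)^{-1}=s\log\frac{a+1}{a}$, part~(ii) with $v=1$ gives, for $n\in\set{2,\dots,2^d}$, the lower bound $\tau(n)\geq\sigma(2)\brackets{1/n}^{\beta(d,n)}$ with $\beta(d,n)=\log\sigma(2)^{-1}/\log\brackets{1+\frac{d}{\log_2 n}}$, which is exactly the asserted bound with constant $\brackets{\frac{a}{a+1}}^{s}$ and exponent $p_{s,a,d,n}$. For the upper bound, part~(i) with $C=(2a)^s$ gives $\tilde C(\delta)=\exp\brackets{C^{(1+\delta)/s}/\delta}=\exp\brackets{(2a)^{1+\delta}/\delta}$ and, using $\sigma(2)^{-(1+\delta)/s}=\brackets{\frac{a+1}{a}}^{1+\delta}$,
\begin{equation*}
 \alpha(d,\delta)=\frac{\log\sigma(2)^{-1}}{\log\brackets{\sigma(2)^{-(1+\delta)/s}\,d}}=\frac{s\log\frac{a+1}{a}}{\log d+(1+\delta)\log\frac{a+1}{a}}=q_{s,a,d,\delta},
\end{equation*}
and Theorem~\ref{preasymptoticstheorem}(i) guarantees this bound for all $n\in\IN$, as claimed.

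The whole argument is bookkeeping; the only point that calls for a moment of care is the choice of $C$, since one wants $C^{1/s}$ small to keep $\tilde C(\delta)$ from exploding. The value $C^{1/s}=2a$ is a clean universal choice; it is slightly wasteful (for $a\geq 1$ already $C^{1/s}=a$ suffices, and for $a\leq 1$ one may take $C^{1/s}=\frac{2a}{1+a}$), but it is the constant appearing in the statement and the slack is immaterial. Everything else is a direct application of Theorem~\ref{preasymptoticstheorem}.
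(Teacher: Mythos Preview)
Your proof is correct and is exactly the intended argument: the paper simply presents this corollary as a direct specialization of Theorem~\ref{preasymptoticstheorem} to the Jacobi singular values $\sigma(n)=\brackets{1+a^{-1}(n-1)}^{-s}$, and you have filled in precisely the required verifications ($\sigma(1)=1$, $v=1$, and the choice $C=(2a)^s$) and substitutions.
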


This means that for large dimension $d$, a preasymptotic decay of approximate order $t_d=s \log\frac{a+1}{a} /\log d$ in $n$ can be observed.

\subsection{Approximation of Mixed Order Sobolev Functions on the Cube}
\label{nonsobsection}

Another example of a tensor power operator is given by the $L_2$-embedding
of mixed order Sobolev functions on the $d$-cube.
Let $\mathbb{I}$ be the 1-cube and $\mathbb{T}$ be the 1-torus.
Both shall be represented by the interval $[a,b]$, where $a$ and $b$ are identified in the second case.
For any $s\in\IN_0$, the vector space
\begin{equation}
 H^s\brackets{\mathbb{I}}=\set{f\in L_2\brackets{\mathbb{I}} \mid f^{(l)}\in L_2\brackets{\mathbb{I}} \text{ for } 1\leq l \leq s}
,\end{equation}
equipped with the scalar product
\begin{equation}
\label{scalarproductdefinition}
 \scalar{f}{g}_s=\sum\limits_{l=0}^s \int_a^b f^{(l)}(x)\cdot \overline{g^{(l)}(x)} ~\d x
\end{equation}
and induced norm $\norm{\cdot}_s$, is a Hilbert space, the Sobolev space of order $s$ on $\mathbb{I}$.
In case $s=0$, it coincides with $L_2\brackets{\mathbb{I}}$.
The subset
\begin{equation}
 H^s\brackets{\mathbb{T}} = \set{f\in H^s\brackets{\mathbb{I}} \mid f^{(l)}(a)=f^{(l)}(b) \text{\,\, for\, } l=0,1,\hdots,s-1}
\end{equation}
of periodic functions is a closed subspace with codimension $s$,
the Sobolev space of order $s$ on $\mathbb{T}$.
By means of Parseval's identity and integration by parts, the above norm can be rearranged to
\begin{equation}
\label{normrewritten}
 \norm{f}_s^2= \sum_{k\in\IZ} \abs{\hat f (k)}^2 \sum_{l=0}^s \abs{\frac{2\pi k}{b-a}}^{2l} \quad
 \text{for } f\in H^s\brackets{\mathbb{T}}
,\end{equation}
where
\begin{equation}
 \hat f(k)=\sqrt{\frac{1}{b-a}} \int_a^b f(x)\cdot \exp\brackets{-2\pi i k \,\frac{x-a}{b-a}} \d x
\end{equation}
is the $k$th Fourier coefficient of $f$.
In the limiting case $s=\infty$, the Sobolev space $H^\infty\brackets{\mathbb{I}}$ shall be defined
as the Hilbert space
\begin{equation}
 H^\infty\brackets{\mathbb{I}}=\set{f\in \mathcal{C}^\infty\brackets{\mathbb{I}}
 \mid \sum_{l=0}^\infty \norm{f^{(l)}}_0^2 < \infty}
,\end{equation}
equipped with the scalar product (\ref{scalarproductdefinition}) for $s=\infty$.
It contains all polynomials and is hence infinite-dimensional.
The space $H^\infty\brackets{\mathbb{T}}$ shall be the closed subspace of periodic functions, i.e.
\begin{equation}
 H^\infty\brackets{\mathbb{T}} = \set{f\in H^\infty\brackets{\mathbb{I}} \mid f^{(l)}(a)=f^{(l)}(b) \text{ for any } l\in \IN_0}
.\end{equation}
Note that (\ref{normrewritten}) also holds for $s=\infty$. Hence,
\begin{equation}
 H^\infty\brackets{\mathbb{T}} = \vspan\set{\exp\brackets{2\pi i k\,\frac{\cdot -a}{b-a}} \mid k\in\IZ \text{ with } \abs{\frac{2\pi k}{b-a}} <1}
\end{equation}
is finite-dimensional with dimension $2 \lceil \frac{b-a}{2\pi}\rceil -1$.
In case $b-a\leq 2\pi$, it consists of constant functions only.

If $s$ is positive, $H^s\brackets{\mathbb{I}}$ is compactly embedded into $L_2\brackets{\mathbb{I}}$.
Let $\sigma^{(s)}(n)$ be the $n$th singular value of this embedding
and let $\tilde \sigma^{(s)}(n)$ be the $n$th singular value of the embedding of the subspace $H^s\brackets{\mathbb{T}}$ into
$L_2\brackets{\mathbb{T}}$.
We want to study the approximation numbers
of the compact embedding of the $d$th tensor power space $H^s_{\rm mix}\brackets{\mathbb{I}^d}$
into $L_2\brackets{\mathbb{I}^d}$. If $s$ is finite, this is the space
\begin{equation}
 H^s_{\rm mix}\brackets{\mathbb{I}^d} = \set{f\in L_2\brackets{\mathbb{I}^d} \mid \diff^\alpha f \in L_2\brackets{\mathbb{I}^d}
 \text{ for each } \alpha\in\set{0,\hdots,s}^d}
,\end{equation}
equipped with the scalar product
\begin{equation}
 \scalar{f}{g}_s=\sum_{\alpha\in\set{0,\hdots,s}^d}
 \int_{[a,b]^d} \diff^\alpha f(\boldsymbol x)\cdot \overline{\diff^\alpha g(\boldsymbol x)} ~\d \boldsymbol{x}
.\end{equation}
See Section~\ref{periodicsection} for a treatment of the $L_2$-approximation numbers
of the $d$th tensor power $H^s_{\rm mix}\brackets{\mathbb{T}^d}$ of the periodic space.

By means of Theorem~\ref{asymptotic theorem} and Theorem~\ref{preasymptoticstheorem}, it is enough
to study the singular values $\sigma^{(s)}$ of the embedding in the univariate case.
As we have seen in Section~\ref{periodicsection},
\begin{equation}
 \tilde\sigma^{(s)}(n)=\brackets{\sum_{l=0}^s \abs{\frac{2\pi\left\lfloor n/2\right\rfloor}{b-a}}^{2l}}^{-1/2}
 \quad\text{for } n\in \IN \text{ and } s\in\IN
\end{equation}
and in particular,
\begin{equation}
 \lim\limits_{n\to\infty} \tilde\sigma^{(s)}(n)\,n^s=\brackets{\frac{b-a}{\pi}}^s
.\end{equation}
The singular values for nonperiodic functions, on the other hand, are not known explicitly.
However, $\sigma^{(s)}$ and $\tilde \sigma^{(s)}$ interrelate as follows.

\begin{lemma}
\label{sigmanlemma}
 For any $n\in\IN$ and $s\in\IN$, it holds that $\sigma^{(s)}(n+s)\leq \tilde\sigma^{(s)}(n)\leq \sigma^{(s)}(n)$.
\end{lemma}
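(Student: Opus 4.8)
The plan is to deduce the claim from the minimax formula (\ref{minmax}) and the definition (\ref{appnumbersdefinition}), using two structural facts recorded in Section~\ref{nonsobsection}: $H^s(\mathbb{T})$ is a subspace of $H^s(\mathbb{I})$ of codimension exactly $s$, and the embedding $\tilde\iota\colon H^s(\mathbb{T})\hookrightarrow L_2(\mathbb{T})$ is nothing but the restriction of $\iota\colon H^s(\mathbb{I})\hookrightarrow L_2(\mathbb{I})$ to that subspace. (In this section the $L_2$-norm on $[a,b]$ carries no normalization constant and $\mathbb{T}$ and $\mathbb{I}$ are the same measure space, so the two scalar products agree on periodic functions.) I write $\sigma^{(s)}(n)=a_n(\iota)$ and $\tilde\sigma^{(s)}(n)=a_n(\tilde\iota)$.

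For the inequality $\tilde\sigma^{(s)}(n)\le\sigma^{(s)}(n)$ I would simply restrict an optimal approximation of the larger embedding. Since $\iota$ is compact, there is an operator $A$ with $\rank A<n$ and $\norm{\iota-A}=\sigma^{(s)}(n)$. Its restriction $A|_{H^s(\mathbb{T})}$ still has rank less than $n$, and since the $H^s$- and the $L_2$-norm on $H^s(\mathbb{T})$ are the restrictions of those on $H^s(\mathbb{I})$,
\[
\norm{\tilde\iota-A|_{H^s(\mathbb{T})}}=\sup\set{\norm{(\iota-A)f}_0 \mid f\in H^s(\mathbb{T}),\ \norm{f}_s=1}\le\norm{\iota-A}=\sigma^{(s)}(n),
\]
whence $\tilde\sigma^{(s)}(n)=a_n(\tilde\iota)\le\sigma^{(s)}(n)$.

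For the inequality $\sigma^{(s)}(n+s)\le\tilde\sigma^{(s)}(n)$ I would go in the opposite direction and \emph{enlarge} an optimal approximation of the smaller embedding, treating the finite-dimensional complement exactly. Let $Z$ be the orthogonal complement of $H^s(\mathbb{T})$ in $H^s(\mathbb{I})$, so that $\dim Z=s$ by the codimension statement, and let $P$ and $Q$ be the orthogonal projections of $H^s(\mathbb{I})$ onto $H^s(\mathbb{T})$ and onto $Z$. Pick $A$ with $\rank A<n$ and $\norm{\tilde\iota-A}=\tilde\sigma^{(s)}(n)$ and set $B=A\circ P+\iota\circ Q$, so that $\rank B\le\rank A+\dim Z<n+s$. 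Since $P+Q$ is the identity and $\iota$ agrees with $\tilde\iota$ on $H^s(\mathbb{T})$, one checks that $(\iota-B)f=(\tilde\iota-A)(Pf)$ for every $f\in H^s(\mathbb{I})$, hence
\[
\norm{(\iota-B)f}_0\le\norm{\tilde\iota-A}\,\norm{Pf}_s\le\tilde\sigma^{(s)}(n)\,\norm{f}_s,
\]
and therefore $\sigma^{(s)}(n+s)=a_{n+s}(\iota)\le\norm{\iota-B}\le\tilde\sigma^{(s)}(n)$. Equivalently, both inequalities follow from (\ref{minmax}) by restricting the competing subspaces $V$ to $H^s(\mathbb{T})$, respectively enlarging them by $Z$.

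I do not expect a genuine obstacle: this is the familiar monotonicity of approximation numbers under passing to a subspace of the domain, sharpened by the exact codimension. The only two points that deserve a line of justification are that $\dim Z=s$ \emph{exactly} — which is precisely the statement that $H^s(\mathbb{T})$ has codimension $s$ in $H^s(\mathbb{I})$ — and that $\tilde\iota$ is genuinely the restriction of $\iota$, i.e.\ that the $L_2$-scalar products on $\mathbb{T}$ and on $\mathbb{I}$ coincide on periodic functions, so that no spurious constant enters the comparison.
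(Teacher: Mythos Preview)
Your proof is correct and follows essentially the same route as the paper. The paper argues both inequalities via the min--max formula~(\ref{minmax}), restricting or enlarging the competing subspaces by the $s$-dimensional orthogonal complement $U$ of $H^s(\mathbb{T})$ in $H^s(\mathbb{I})$; you phrase the same idea in terms of approximating operators (restricting an optimal $A$, respectively augmenting it by $\iota\circ Q$), and you even note the equivalence yourself at the end.
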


\begin{proof}
 The second inequality is obvious, since $H^s\brackets{\mathbb{T}}$ is a subspace of $H^s\brackets{\mathbb{I}}$.
 The first inequality is true, since the codimension of this subspace is $s$.
 Let $U$ be the orthogonal complement of of $H^s\brackets{\mathbb{T}}$ in $H^s\brackets{\mathbb{I}}$.
 By relation~(\ref{minmax}),
 \begin{equation}
 \begin{split}
  \sigma^{(s)}(n+s)\,=
  &\min\limits_{\substack{V\subseteq H^s\brackets{\mathbb{I}}\\ \dim(V)\leq n+s-1}}\, \max\limits_{\substack{f\in H^s\brackets{\mathbb{I}}, f\perp V\\ \norm{f}_s=1}} \norm{f}_0
  \,\leq \min\limits_{\substack{\tilde V\subseteq H^s\brackets{\mathbb{T}}\\ \dim(\tilde V)\leq n-1}}\, \max\limits_{\substack{f\in H^s\brackets{\mathbb{I}}, \norm{f}_s=1\\f\perp(\tilde V\oplus U)}} \norm{f}_0\\
  &= \min\limits_{\substack{\tilde V\subseteq H^s\brackets{\mathbb{T}}\\ \dim(\tilde V)\leq n-1}}\, \max\limits_{\substack{f\in H^s\brackets{\mathbb{T}}, f \perp \tilde V\\\norm{f}_s=1}} \norm{f}_0
  \,=\, \tilde\sigma^{(s)}(n)
 .\end{split}
 \end{equation}
  Note that the same argument is not valid for $d>1$. In this case, the codimension of
  $H^s_{\rm mix}\brackets{\mathbb{T}^d}$ in $H^s_{\rm mix}\brackets{\mathbb{I}^d}$ is not finite.
\end{proof}

Lemma~\ref{sigmanlemma} implies that the asymptotic constants of the approximation numbers for the periodic and the nonperiodic
functions coincide in the univariate case:
\begin{equation}
\begin{split}
 \lim\limits_{n\to\infty} n^s\tilde\sigma^{(s)}(n)
 &\leq \lim\limits_{n\to\infty} n^s\sigma^{(s)}(n) 
 = \lim\limits_{n\to\infty} (n+s)^s\sigma^{(s)}(n+s)\\
 &= \lim\limits_{n\to\infty} n^s\sigma^{(s)}(n+s) 
 \leq \lim\limits_{n\to\infty} n^s\tilde\sigma^{(s)}(n).
\end{split}
\end{equation}
Theorem~\ref{asymptotic theorem} implies that they also coincide in the multivariate case.

\begin{cor} For any $d\in\IN$ and $s\in\IN$, the following limit exists:
\begin{equation*}
 \lim\limits_{n\to\infty} a_n\brackets{H^s_{\rm mix}\brackets{\mathbb{I}^d} \hookrightarrow L_2\brackets{\mathbb{I}^d}} \cdot n^s \brackets{\log n}^{-s(d-1)}
  =\brackets{\frac{\brackets{b-a}^d}{\pi^d \brackets{d-1}!}}^s
.\end{equation*}
\end{cor}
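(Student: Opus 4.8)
The goal is to show that the $L_2$-approximation numbers of the embedding $H^s_{\mathrm{mix}}(\mathbb{I}^d)\hookrightarrow L_2(\mathbb{I}^d)$, multiplied by $n^s(\log n)^{-s(d-1)}$, converge to $\big((b-a)^d/(\pi^d(d-1)!)\big)^s$. The whole strategy is to reduce the multivariate nonperiodic problem to the univariate case via the tensor power structure, and then to the periodic case, where the singular values are known explicitly.

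\textbf{Step 1: Identify the tensor power structure.} The space $H^s_{\mathrm{mix}}(\mathbb{I}^d)$ with the stated scalar product $\scalar{f}{g}_s=\sum_{\alpha\in\{0,\dots,s\}^d}\scalar{\diff^\alpha f}{\diff^\alpha g}$ is exactly the $d$th tensor power of $H^s(\mathbb{I})$ equipped with (\ref{scalarproductdefinition}), and $L_2(\mathbb{I}^d)$ is the $d$th tensor power of $L_2(\mathbb{I})$; the embedding is the $d$th tensor power of the univariate embedding $H^s(\mathbb{I})\hookrightarrow L_2(\mathbb{I})$. By the general principle recorded at the end of Section~\ref{applicationssection} (the singular values of $T^d$ are the $d$-fold products of the singular values of $T$), the sequence $a_n(H^s_{\mathrm{mix}}(\mathbb{I}^d)\hookrightarrow L_2(\mathbb{I}^d))$ is the nonincreasing rearrangement $\tau$ of the $d$th tensor power of $\sigma^{(s)}$, where $\sigma^{(s)}(n)$ is the $n$th singular value of $H^s(\mathbb{I})\hookrightarrow L_2(\mathbb{I})$.

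\textbf{Step 2: Apply the asymptotic theorem.} Theorem~\ref{asymptotic theorem} says that if $\sigma^{(s)}(n)\simeq c\,n^{-s}$ then $\tau(n)\simeq \frac{c^d}{((d-1)!)^s}n^{-s}(\log n)^{s(d-1)}$, i.e.\ the limit in question exists and equals $\big(c^d/(d-1)!\big)^s$. So it suffices to show that $\lim_{n\to\infty} n^s\sigma^{(s)}(n) = \big((b-a)/\pi\big)^s$, i.e.\ that the univariate asymptotic constant is $c=(b-a)/\pi$.

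\textbf{Step 3: Pin down the univariate constant via the periodic case.} This is the crux, and it is already carried out in the paragraph preceding the corollary: Lemma~\ref{sigmanlemma} gives $\sigma^{(s)}(n+s)\le\tilde\sigma^{(s)}(n)\le\sigma^{(s)}(n)$, and a squeeze argument — using that $(n+s)^s\sim n^s$ — forces $\lim_n n^s\sigma^{(s)}(n)=\lim_n n^s\tilde\sigma^{(s)}(n)$. For the periodic embedding the singular values are explicit, $\tilde\sigma^{(s)}(n)=\big(\sum_{l=0}^s|2\pi\lfloor n/2\rfloor/(b-a)|^{2l}\big)^{-1/2}$, and the dominant term as $n\to\infty$ is $l=s$, giving $n^s\tilde\sigma^{(s)}(n)\to(b-a)^s/(\pi^s)$ — indeed $\lfloor n/2\rfloor\sim n/2$ so $(2\pi\lfloor n/2\rfloor/(b-a))^s\sim(\pi n/(b-a))^s$. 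Hence $c=(b-a)/\pi$.

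\textbf{Step 4: Assemble.} Combining Steps 2 and 3, $\tau(n)\simeq \frac{((b-a)/\pi)^{ds}}{((d-1)!)^s}\,n^{-s}(\log n)^{s(d-1)}$, which is precisely the claimed identity with the constant $\big((b-a)^d/(\pi^d(d-1)!)\big)^s$.

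\textbf{Main obstacle.} There is no serious analytic obstacle left; the real content — the codimension-$s$ comparison of the nonperiodic and periodic univariate singular values (Lemma~\ref{sigmanlemma}) and the resulting equality of asymptotic constants — has already been established before the statement. The proof of the corollary is thus a short bookkeeping step: record that the embedding is a $d$th tensor power, quote Theorem~\ref{asymptotic theorem}, and insert the univariate constant $c=(b-a)/\pi$ obtained from the explicit periodic singular values together with the squeeze from Lemma~\ref{sigmanlemma}. The only point requiring a word of care is that Theorem~\ref{asymptotic theorem} is an ``$\simeq$'' statement requiring the univariate limit $\lim_n n^s\sigma^{(s)}(n)$ to exist (not merely $\limsup$/$\liminf$ bounds), which is exactly what the squeeze in Step 3 delivers.
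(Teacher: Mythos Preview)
Your proposal is correct and follows exactly the paper's approach: the corollary is obtained by combining the tensor power structure, the equality of the univariate asymptotic constants for the periodic and nonperiodic embeddings (derived from Lemma~\ref{sigmanlemma} via the squeeze argument already written out before the statement), and Theorem~\ref{asymptotic theorem}. There is a harmless notational wobble in Step~2 versus Step~3 (whether $c$ denotes the constant in $\sigma^{(s)}(n)\simeq c\,n^{-s}$ or its $s$th root), but the final assembly in Step~4 is consistent and correct.
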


As depicted in Section~\ref{preasymptoticssection}, the approximation numbers show a preasymptotic decay
of approximate order $\frac{\log \sigma^{(s)}(2)^{-1}}{\log d}$.
Lemma~\ref{sigmanlemma} gives no information on $\sigma^{(s)}(2)$. 
However, relation~(\ref{minmax}) implies that
\begin{equation}
 \sigma^{(\infty)}(2) = \max\limits_{ f\perp 1,\, f\neq 0} \frac{\norm{f}_0}{\norm{f}_\infty}
 \geq \frac{\norm{2x-a-b}_0}{\norm{2x-a-b}_\infty}
 = \sqrt{\frac{(b-a)^2}{12+(b-a)^2}}.
\end{equation}
If, for example, the length of the interval $\mathbb{I}$ is one, we obtain
\begin{equation}
 \sigma^{(\infty)}(2) \geq 0.27735
.\end{equation}
Since any lower bound on the approximation numbers for $s=\infty$ is a lower bound for $s\in\IN$,
Theorem~\ref{preasymptoticstheorem} yields the following corollary.

\begin{cor}
 For any $d\in\IN$, any $s\in\IN\cup\set{\infty}$ and $d<n\leq 2^d$,
 \begin{align*}
  &a_n\brackets{H^s_{\rm mix}\brackets{[0,1]^d} \hookrightarrow L_2\brackets{[0,1]^d}}
   \geq\, 0.27 \cdot n^{-c(d,n)},\\
 &\text{where}\quad\quad c(d,n) = \frac{1.2825}{\log \brackets{1+\frac{2 d}{\log_2 n}}}\leq 1.17
.\end{align*}
\end{cor}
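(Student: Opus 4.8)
The plan is to deduce this from Theorem~\ref{preasymptoticstheorem}(ii) (equivalently Theorem~\ref{preasymptoticstheorem1}), applied to the univariate singular-value sequence of the Sobolev embedding on $[0,1]$. Recall from Section~\ref{nonsobsection} that, with the indicated scalar products, the embedding $H^s_{\rm mix}\brackets{[0,1]^d}\hookrightarrow L_2\brackets{[0,1]^d}$ is the $d$th tensor power of the univariate embedding $\iota_s\colon H^s\brackets{[0,1]}\hookrightarrow L_2\brackets{[0,1]}$, so its approximation numbers form the nonincreasing rearrangement $\tau$ of the $d$th tensor power of $\sigma^{(s)}$. I would first reduce to the limiting case $s=\infty$: since $\norm{f}_s\leq\norm{f}_\infty$, the inclusion $j\colon H^\infty\brackets{[0,1]}\to H^s\brackets{[0,1]}$ has norm at most one and $\iota_\infty=\iota_s\circ j$, so $\sigma^{(\infty)}(n)=a_n(\iota_\infty)\leq a_n(\iota_s)=\sigma^{(s)}(n)$ for every $n$; this pointwise bound is preserved both by forming $d$th tensor powers and by passing to nonincreasing rearrangements, so a lower bound for the $s=\infty$ approximation numbers is a lower bound for all $s\in\IN\cup\set{\infty}$. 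From here on $\sigma:=\sigma^{(\infty)}$ and $\tau$ is the rearrangement of its $d$th tensor power.

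Next I would check the hypotheses of Theorem~\ref{preasymptoticstheorem}(ii) for $\sigma$. That $\sigma(1)=\norm{\iota_\infty}=1$ is immediate, since constants attain $\norm{f}_\infty=\norm{f}_0$ while $\norm{f}_\infty\geq\norm{f}_0$ in general; and $\sigma(1)>\sigma(2)$, because an eigenvector of $\iota_\infty^{*}\iota_\infty$ for the eigenvalue $1$ satisfies $\norm{f}_0=\norm{f}_\infty$ and is therefore constant, so that eigenspace is one-dimensional. The substantial point is the dimension-free lower bound $\sigma(2)\geq\sqrt{1/13}=0.27735\ldots$, which I would obtain from the min--max formula~(\ref{minmax}): testing with the two-dimensional space spanned by the constant function and $x\mapsto 2x-1$, which is orthogonal in both $H^\infty\brackets{[0,1]}$ and $L_2\brackets{[0,1]}$ with $\norm{1}_0^2=\norm{1}_\infty^2=1$, $\norm{2x-1}_0^2=\tfrac13$ and $\norm{2x-1}_\infty^2=\tfrac13+4=\tfrac{13}{3}$, every nonzero $f$ in it has $\norm{f}_0^2/\norm{f}_\infty^2\geq\tfrac{1/3}{13/3}=\tfrac1{13}$, so $\sigma(2)=a_2(\iota_\infty)\geq 1/\sqrt{13}$. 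This is exactly the $[a,b]=[0,1]$ instance of the estimate recorded just before the corollary.

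With these facts in hand, Theorem~\ref{preasymptoticstheorem}(ii) yields, for $n$ in the admissible range, a bound $\tau(n)\geq\sigma(2)\cdot n^{-\beta(d,n)}$ whose exponent is controlled by $\log\sigma(2)^{-1}$ and a logarithm of the shape $\log\brackets{1+\tfrac{c\,d}{\log_{b} n}}$. Inserting $\sigma(2)\geq 0.27735>0.27$ and $\log\sigma(2)^{-1}\leq\log\sqrt{13}=\tfrac12\log 13\leq 1.2825$, and using that the exponent is monotone in its denominator, this rearranges into $\tau(n)\geq 0.27\cdot n^{-c(d,n)}$; the precise shape of the denominator of $c(d,n)$ comes from inserting the appropriate value of $v$ in Theorem~\ref{preasymptoticstheorem}(ii) together with elementary estimates such as $\log_3 n\leq\log_2 n$, and the restriction $d<n\leq 2^d$ (that is, $\log_2 n\leq d$) is what forces $1+\tfrac{2d}{\log_2 n}\geq 3$ and hence $c(d,n)\leq 1.2825/\log 3\leq 1.17$. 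Combined with the reduction of the first paragraph, this gives the claimed estimate for every $s\in\IN\cup\set{\infty}$.

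The one step that is not pure bookkeeping is the dimension-free lower bound on $\sigma^{(\infty)}(2)$: it must be uniform in $s$ and must hold in the limiting space $H^\infty\brackets{[0,1]}$, where the singular functions are no longer the explicit trigonometric functions available in the periodic case. The affine test function $x\mapsto 2x-1$ makes this elementary, and it is this choice that pins down the numerical constants $0.27$ and $1.2825$ (and hence the bound $1.17$) in the statement; the tensor-power identification, the passage to $s=\infty$, and the algebraic simplification of the exponent then follow routinely from Theorem~\ref{preasymptoticstheorem}.
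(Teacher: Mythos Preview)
Your proposal is correct and follows essentially the same route as the paper: reduce to $s=\infty$ via $\sigma^{(\infty)}(n)\leq\sigma^{(s)}(n)$, obtain the uniform bound $\sigma^{(\infty)}(2)\geq 1/\sqrt{13}$ from the affine test function $2x-1$, and then invoke Theorem~\ref{preasymptoticstheorem}(ii). The only cosmetic difference is that you extract the bound on $\sigma^{(\infty)}(2)$ from the Courant--Fischer max--min principle on the two-dimensional span of $1$ and $2x-1$, while the paper uses the min--max form~(\ref{minmax}) with $V=\vspan\{1\}$ and the single test vector $2x-1$; both produce the same constant, and your remaining numerical simplifications of the exponent are handled in the paper with the same level of detail.
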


On the other hand, any upper bound on the approximation numbers for $s=1$ is an upper bound for $s\geq 1$.
The singular values $\sigma^{(s)}(n)$ for $s=1$ are known.
Let $T_s$ be the compact embedding of $H^s\brackets{\mathbb{I}}$ into $L_2\brackets{\mathbb{I}}$ and let $W_s=T_s^*T_s$.
Then $\sigma^{(s)}(n)$ is the square-root of the $n$th largest eigenvalue of $W_s$.
It is shown in \cite{agnan} that the family $\brackets{b_k}_{k\in\IN_0}$ is a complete orthogonal system in $H^1\brackets{\mathbb{I}}$,
where the function $b_k:\mathbb{I}\to\IR$ with
\begin{equation}
 b_k(x)=\cos\brackets{k\pi\cdot\frac{x-a}{b-a}}  \quad\text{for } k\in\IN_0
\end{equation}
is an eigenfunction of $W_1$ with respective eigenvalue
\begin{equation}
 \lambda_k=\brackets{1+\brackets{\frac{k\pi}{b-a}}^2}^{-1}.
\end{equation}
In case $\mathbb{I}=[0,1]$,
\begin{equation}
 \sigma^{(1)}(2)=\brackets{\sqrt{1+\pi^2}}^{-1}\leq 0.30332
\end{equation}
and
\begin{equation}
 \sigma^{(1)}(n)\leq 0.607 \cdot n^{-1}
\end{equation}
for $n\geq 2$. Theorem~\ref{preasymptoticstheorem} for $\delta=0.65$ yields the following upper bound.

\begin{cor}
 For any $d\in\IN$, any $s\in\IN\cup\set{\infty}$ and $n\in\IN$,
 \begin{equation*}
  a_n\brackets{H^s_{\rm mix}\brackets{[0,1]^d} \hookrightarrow L_2\brackets{[0,1]^d}}
  \leq\, \brackets{\frac{2}{n}}^{c(d)}
  \quad\text{with}\quad 
  c(d)=\frac{1.1929}{2+\log d}.
  \end{equation*}
\end{cor}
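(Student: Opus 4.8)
The plan is to apply part $(i)$ of Theorem~\ref{preasymptoticstheorem} — in the form of the bound~(\ref{otherformofpreasymptotics1}), which is valid for all $n\in\IN$ — to a suitably normalized version of the singular value sequence $\sigma^{(1)}$ of the univariate embedding $H^1([0,1])\hookrightarrow L_2([0,1])$, and then invoke the monotonicity in $s$ of the approximation numbers to conclude the same upper bound for all $s\in\IN\cup\{\infty\}$. The key preliminary observation is that $H^s_{\rm mix}([0,1]^d)\hookrightarrow H^1_{\rm mix}([0,1]^d)$ continuously with norm at most one for every $s\geq 1$, so $a_n\brackets{H^s_{\rm mix}([0,1]^d)\hookrightarrow L_2([0,1]^d)}\leq a_n\brackets{H^1_{\rm mix}([0,1]^d)\hookrightarrow L_2([0,1]^d)}$; hence it suffices to prove the stated bound for $s=1$, where the tensor power structure of the $d$-variate embedding reduces everything to the univariate sequence $\sigma=\sigma^{(1)}$.

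The concrete steps are as follows. First, recall from the text just above the corollary that $\sigma^{(1)}(1)=1>\sigma^{(1)}(2)=(1+\pi^2)^{-1/2}$, that $\sigma^{(1)}(2)\leq 0.30332$, and that $\sigma^{(1)}(n)\leq 0.607\,n^{-1}$ for $n\geq 2$; so Theorem~\ref{preasymptoticstheorem} applies with $C=0.607$ and $s=1$. Second, substitute $\delta=0.65$ into~(\ref{otherformofpreasymptotics1}): the exponent becomes
\begin{equation*}
 \alpha(d,0.65)=\frac{\log \sigma^{(1)}(2)^{-1}}{\log\brackets{\sigma^{(1)}(2)^{-1.65}\cdot d}}
 =\frac{\log \sigma^{(1)}(2)^{-1}}{1.65\,\log\sigma^{(1)}(2)^{-1}+\log d},
\end{equation*}
and the constant becomes $\widetilde C(0.65)=\exp\brackets{C^{1.65}/0.65}$. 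Third, bound these two quantities numerically: since $\sigma^{(1)}(2)^{-1}=\sqrt{1+\pi^2}$, one has $\log\sigma^{(1)}(2)^{-1}=\tfrac12\log(1+\pi^2)$, and a direct computation gives $1.65\cdot\tfrac12\log(1+\pi^2)\leq 2$ (so the denominator is at most $2+\log d$) while $\tfrac12\log(1+\pi^2)\geq 1.1929$ (so the numerator dominates the claimed constant); one must also check $\exp\brackets{0.607^{1.65}/0.65}\leq 2$, which holds since $0.607^{1.65}<0.429$ and $0.429/0.65<0.66<\log 2$. Combining, $\alpha(d,0.65)\geq \tfrac{1.1929}{2+\log d}=c(d)$, and since the base $\widetilde C(0.65)/n\leq 2/n$ is below $1$ exactly when $n\geq 2$ — and for $n=1$ the bound $\tau(1)=1\leq 2^{c(d)}$ is trivial — we obtain $a_n\leq (2/n)^{c(d)}$ for all $n\in\IN$.

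The only real work is the numerical verification in the third step, and even that is routine: the three inequalities $1.65\cdot\tfrac12\log(1+\pi^2)\leq 2$, $\tfrac12\log(1+\pi^2)\geq 1.1929$, and $0.607^{1.65}/0.65\leq\log 2$ are each checked by elementary estimates on $\log(1+\pi^2)\approx 2.39$ and on the power $0.607^{1.65}$. The mild subtlety — the "obstacle," such as it is — lies in making sure the chosen $\delta=0.65$ simultaneously keeps $\widetilde C(\delta)\leq 2$ (which pushes $\delta$ up) and keeps $\alpha(d,\delta)$ as close as possible to its $\delta\to 0$ limit $\tfrac{\log\sigma^{(1)}(2)^{-1}}{\log d}$ (which pushes $\delta$ down); the value $0.65$ is the near-optimal compromise, and one should remark that this is why it was selected, mirroring the discussion after Theorem~\ref{preasymptoticstheorem} on the trade-off governed by $\delta$.
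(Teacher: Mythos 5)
Your proof follows the paper's intended argument exactly: bound the approximation numbers for $s\geq 1$ by those for $s=1$, then feed the explicit data $\sigma^{(1)}(2)=(1+\pi^2)^{-1/2}$ and $\sigma^{(1)}(n)\leq 0.607\,n^{-1}$ (for $n\geq 2$) into part $(i)$ of Theorem~\ref{preasymptoticstheorem} with $\delta=0.65$, handling $n=1$ trivially. One small numerical slip: $0.607^{1.65}\approx 0.439$, not $<0.429$ as you wrote — but since $0.439/0.65\approx 0.675<\log 2\approx 0.693$, the conclusion $\widetilde C(0.65)\leq 2$ still holds and the argument goes through unchanged.
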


Apparently, the upper bound for $s=1$ and the lower bound for $s=\infty$ are already close.
The gap between the cases $s=2$ and $s=\infty$ is even smaller.

Let $c$ be the midpoint of $\mathbb{I}$ and let $l$ be its radius.
Moreover, let $\hat\omega = \sqrt{1+\omega^2}$ for $\omega\in\IR$ and consider the countable sets
\begin{equation}
 \begin{split}
  &I_1=\set{\omega\geq 0\mid \hat\omega^3\cosh(\hat\omega l)\sin(\omega l)
  + \omega^3\sinh(\hat\omega l)\cos(\omega l)=0},\\
  &I_2=\set{\omega> 0\mid \hat\omega^3\sinh(\hat\omega l)\cos(\omega l)
  - \omega^3\cosh(\hat\omega l)\sin(\omega l)=0}.
 \end{split}
\end{equation}
It can be shown (with some effort) that the family $\brackets{b_\omega}_{\omega\in I_1\cup I_2}$
is a complete orthogonal system in $H^2\brackets{\mathbb{I}}$,
where the function $b_\omega:\mathbb{I}\to\IR$ with
\begin{equation}\begin{split}
  &b_\omega(x)
  =\omega^2\cdot \frac{\cosh\brackets{\hat \omega (x-c)}}{\cosh\brackets{\hat\omega l}}
  +\hat\omega^2\cdot \frac{\cos\brackets{\omega (x-c)}}{\cos\brackets{\omega l}},\quad \text{if }\omega\in I_1,\\
  &b_\omega(x)
  =\omega^2\cdot \frac{\sinh\brackets{\hat \omega (x-c)}}{\sinh\brackets{\hat\omega l}}
  +\hat\omega^2\cdot \frac{\sin\brackets{\omega (x-c)}}{\sin\brackets{\omega l}},\quad \text{if }\omega\in I_2,
\end{split}
\end{equation}
is an eigenfunction of $W_2$ with respective eigenvalue
\begin{equation}
 \lambda_\omega=\brackets{1+\omega^2+\omega^4}^{-1}
.\end{equation}
In particular,
\begin{equation}
 \sigma^{(2)}(2) = \brackets{\sqrt{1+\omega_0^2+\omega_0^4}}^{-1},
\end{equation}
where $\omega_0$ is the smallest nonzero element of $I_1\cup I_2$.
If, for example, the interval $\mathbb{I}$ has unit length, we obtain
\begin{equation}
 \sigma^{(2)}(2)\leq 0.27795 
\end{equation}
and like before,
\begin{equation}
 \sigma^{(2)}(n)\leq 0.607 \cdot n^{-1}
\end{equation}
for $n\geq 2$.
Theorem~\ref{preasymptoticstheorem} for $\delta=0.65$ yields the following upper bound.

\begin{cor}
 For any $d\in\IN$, any $s\in\IN\cup\set{\infty}$ with $s\geq 2$ and $n\in\IN$,
 \begin{equation*}
  a_n\brackets{H^s_{\rm mix}\brackets{[0,1]^d} \hookrightarrow L_2\brackets{[0,1]^d}}
  \leq\, \brackets{\frac{2}{n}}^{c(d)}
  \quad\text{with}\quad 
  c(d)=\frac{1.2803}{2+\log d}.
  \end{equation*}
\end{cor}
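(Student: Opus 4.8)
The plan is to reduce the claim for general $s\geq 2$ to the single case $s=2$, and then to obtain that case by applying Theorem~\ref{preasymptoticstheorem}(i) to the univariate sequence of singular values of the embedding $H^2\brackets{[0,1]}\hookrightarrow L_2\brackets{[0,1]}$.

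For the reduction, I would note that for every $s\in\IN\cup\set{\infty}$ with $s\geq 2$ one has $H^s_{\rm mix}\brackets{[0,1]^d}\subseteq H^2_{\rm mix}\brackets{[0,1]^d}$, and the inclusion map has norm at most one because the squared norm of $H^s_{\rm mix}\brackets{[0,1]^d}$, namely $\sum_{\alpha\in\set{0,\hdots,s}^d}\norm{\diff^\alpha f}_{L_2}^2$, dominates the one of $H^2_{\rm mix}\brackets{[0,1]^d}$. Writing $H^s_{\rm mix}\brackets{[0,1]^d}\hookrightarrow L_2\brackets{[0,1]^d}$ as the composition of this norm-one inclusion with $H^2_{\rm mix}\brackets{[0,1]^d}\hookrightarrow L_2\brackets{[0,1]^d}$, and using the elementary fact (immediate from the definition of $a_n$) that approximation numbers do not increase under composition with a norm-one operator, I would conclude
\begin{equation*}
 a_n\brackets{H^s_{\rm mix}\brackets{[0,1]^d}\hookrightarrow L_2\brackets{[0,1]^d}}\leq a_n\brackets{H^2_{\rm mix}\brackets{[0,1]^d}\hookrightarrow L_2\brackets{[0,1]^d}}\quad\text{for all }n\in\IN.
\end{equation*}
For $s=\infty$ the space $H^\infty_{\rm mix}\brackets{[0,1]^d}$ is likewise a norm-one subspace of $H^2_{\rm mix}\brackets{[0,1]^d}$, so in all cases it suffices to treat $s=2$.

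For $s=2$, the approximation numbers in question are exactly the nonincreasing rearrangement of the $d$th tensor power of the univariate singular value sequence $\sigma=\sigma^{(2)}$. I would invoke the three facts recorded just before the corollary: $\sigma(1)=1$; $\sigma(2)=\brackets{1+\omega_0^2+\omega_0^4}^{-1/2}\leq 0.27795$, where $\omega_0$ is the smallest positive element of $I_1\cup I_2$; and $\sigma(n)\leq 0.607\,n^{-1}$ for $n\geq 2$. Since $1=\sigma(1)>\sigma(2)>0$, Theorem~\ref{preasymptoticstheorem}(i) applies with decay exponent $1$ and constant $C=0.607$; taking the free parameter $\delta=0.65$ it yields, for every $n\in\IN$,
\begin{equation*}
 \tau(n)\leq\brackets{\frac{\tilde C(0.65)}{n}}^{\alpha(d,0.65)},\qquad
 \tilde C(0.65)=\exp\!\brackets{\frac{0.607^{1.65}}{0.65}},\qquad
 \alpha(d,0.65)=\frac{\log\sigma(2)^{-1}}{\log d+1.65\log\sigma(2)^{-1}}.
\end{equation*}

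It then remains to compare these explicit quantities with the ones in the statement. One checks numerically that $0.607^{1.65}/0.65<\log 2$, so $\tilde C(0.65)<2$; and since $\alpha(d,\delta)$ is increasing in $\log\sigma(2)^{-1}$ while $\log\sigma(2)^{-1}\geq\log(1/0.27795)\geq 1.2803$, one obtains $\alpha(d,0.65)\geq c(d)$. Combining $\tilde C(0.65)/n\leq 2/n$ with $0<c(d)\leq\alpha(d,0.65)$ and the monotonicity of $t\mapsto t^{\alpha(d,0.65)}$ and of $u\mapsto(2/n)^{u}$ for $n\geq2$ turns the displayed bound into $\tau(n)\leq(2/n)^{c(d)}$ for $n\geq2$, while the case $n=1$ is trivial because $\tau(1)=1\leq 2^{c(d)}$. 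I expect the genuine obstacle to be not this bookkeeping but the univariate input $\sigma^{(2)}(2)\leq 0.27795$ (equivalently, the lower bound on the first positive root $\omega_0$): it rests on the spectral analysis of $W_2=T_2^{*}T_2$ sketched just before the corollary --- verifying that $\brackets{b_\omega}_{\omega\in I_1\cup I_2}$ is a complete orthogonal system of eigenfunctions with eigenvalues $\brackets{1+\omega^2+\omega^4}^{-1}$, and locating the smallest nonzero solution of the transcendental equations defining $I_1$ and $I_2$ --- which the text itself flags as requiring ``some effort.''
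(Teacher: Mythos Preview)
Your reduction to $s=2$ and your invocation of Theorem~\ref{preasymptoticstheorem}(i) with $C=0.607$, decay exponent $1$, and $\delta=0.65$ is exactly the route the paper indicates, and your check that $\tilde C(0.65)=\exp\brackets{0.607^{1.65}/0.65}<2$ is correct.

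The gap is in the claim $\alpha(d,0.65)\geq c(d)$. Writing $x=\log\sigma(2)^{-1}$, the exponent from Theorem~\ref{preasymptoticstheorem}(i) is
\[
\alpha(d,0.65)=\frac{x}{\log d+1.65\,x},
\]
and at $x=1.2803$ this equals $1.2803/(\log d+1.65\cdot1.2803)=1.2803/(\log d+2.113)$, which is \emph{strictly smaller} than $c(d)=1.2803/(\log d+2)$ because $1.65\cdot1.2803\approx2.113>2$. Monotonicity in $x$ therefore only gives $\alpha(d,0.65)\geq1.2803/(\log d+2.113)$, not $\alpha(d,0.65)\geq c(d)$; in fact for $d=1$ one has $\alpha(1,0.65)=1/1.65\approx0.606$ independently of the value of $\sigma(2)$, while $c(1)=1.2803/2\approx0.640$. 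Since your final step $(2/n)^{\alpha}\leq(2/n)^{c(d)}$ for $n\geq2$ requires $\alpha\geq c(d)$, the argument as written does not deliver the denominator $2+\log d$; with these inputs one gets at best roughly $2.12+\log d$. The paper's own justification is the single sentence ``Theorem~\ref{preasymptoticstheorem} for $\delta=0.65$ yields the following upper bound'' and does not spell out this arithmetic, so you have faithfully reproduced the intended approach --- but the numerical bookkeeping does not close with the constant $2$ as stated.
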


In short, the preasymptotic rate of the $L_2$-approximation numbers
of mixed order $s$ Sobolev functions on the unit cube is $\frac{1.1929}{\log d}$ for $s=1$,
and in between $\frac{1.2803}{\log d}$ and $\frac{1.2825}{\log d}$ for any other $s\in\IN\cup\set{\infty}$.

\section{Tractability through Decreasing Complexity of the Univariate Problem}
\label{tracsection}

For every $d\in\IN$, let $X_d$ and $Y_d$ be normed spaces and let $F_d$ be a subset of $X_d$.
We want to approximate the operator $T_d: F_d \to Y_d$ by an algorithm $A_n: F_d \to Y_d$ that uses at most $n$
linear and continuous functionals on $X_d$.
The $n$th minimal worst case error
\begin{equation}
 e(n,d)=\inf\limits_{A_n} \sup\limits_{f\in F_d} \norm{T_d f- A_n f}_{Y_d}
\end{equation}
measures the worst case error of the best such algorithm $A_n$.
If $F_d$ is the unit ball of a pre-Hilbert space and $T_d$ is linear,
it is known to coincide with the $(n+1)$th approximation number of $T_d$.
Conversely, the information complexity
\begin{equation}
 n(\varepsilon,d)=\min \set{n\in\IN_0 \mid e(n,d) < \varepsilon}
\end{equation}
is the minimal number of linear and continuous functionals
that is needed to achieve an error less than $\varepsilon$.
The problem $\set{T_d}$ is called polynomially tractable, if there are nonnegative numbers $C$, $p$ and $q$ such that
\begin{equation}
\label{trackdef}
 n(\varepsilon,d) \leq C\, \varepsilon^{-q}\, d^p  \quad\quad \text{for all } d\in\IN \text{ and } \varepsilon>0.
\end{equation}
It is called strongly polynomially tractable, if (\ref{trackdef}) holds with $p$ equal to zero.
See \cite{track1} for a detailed treatment of these and other concepts of tractability.

In the following, $X_d$ and $Y_d$ will be Hilbert spaces and $T_d$ will be a linear and compact norm-one operator
with approximation numbers of polynomial decay.
For example, one can think of $T_d$ as the embedding of the Sobolev space $H^{s_d}(G)$ into $H^{r_d}(G)$
for some $r_d<s_d$ and a compact manifold $G$.
Let $T_d^d$ be the $d$th tensor power of $T_d$.
In the chosen example, this is the embedding of $H^{s_d}_{\rm mix}\brackets{G^d}$ into $H^{r_d}_{\rm mix}\brackets{G^d}$.
We will refer to $\set{T_d}$ as the univariate and to $\set{T_d^d}$ as the multivariate problem.
It is proven in \cite[Theorem~5.5]{track1} that the multivariate problem is not polynomially tractable,
if $T_d$ is the same operator for every $d\in\IN$.
This corresponds to the case, where the complexity of the univariate problem is constant in $d$.
Can we achieve polynomial tractability of the multivariate problem,
if the complexity of the univariate problem decreases, as $d$ increases?
If yes, to which extent do we have to simplify the univariate problem?
The answer is given by the following theorem.

\begin{thm}
\label{tractabilitytheorem}
 For every natural number $d$, let $T_d$ be a compact norm-one operator between Hilbert spaces
 and let $T_d^d$ be its $d$th tensor power.
 Assume that $a_n\brackets{T_d}$ is nonincreasing in $d$
 and $a_n\brackets{T_1}$ decays polynomially in $n$.
 The problem $\set{T_d^d}$ is strongly polynomially tractable,
 iff it is polynomially tractable, iff $a_2\brackets{T_d}$ decays polynomially in $d$.
\end{thm}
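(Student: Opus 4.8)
The plan is to establish the three implications "strong polynomial tractability $\Rightarrow$ polynomial tractability $\Rightarrow$ $a_2(T_d)$ decays polynomially $\Rightarrow$ strong polynomial tractability." The first implication is trivial by definition. For the remaining two I will exploit the identification of $a_{n+1}(T_d^d)$ with the $n$th value of the nonincreasing rearrangement $\tau$ of the $d$th tensor power of the singular value sequence $\sigma^{(d)} := \brackets{a_n(T_d)}_{n\in\IN}$ (established in Section~\ref{applicationssection}), together with the preasymptotic bounds of Theorem~\ref{preasymptoticstheorem1}/\ref{preasymptoticstheorem}. Recall $\sigma^{(d)}(1)=1$ since $T_d$ has norm one.

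For "polynomial tractability $\Rightarrow$ $a_2(T_d)$ decays polynomially in $d$": assume $n(\varepsilon,d)\le C\varepsilon^{-q}d^p$, i.e. $e(n,d)=a_{n+1}(T_d^d)<\varepsilon$ whenever $n\ge C\varepsilon^{-q}d^p$. I will apply the \emph{lower} bound of Theorem~\ref{preasymptoticstheorem} part (ii), with $\sigma=\sigma^{(d)}$, $\sigma(2)=a_2(T_d)$ and $v=v_d\ge 1$: for $n\in\set{2,\dots,(1+v_d)^d}$ one has $\tau(n)\ge \sigma(2)\,n^{-\beta(d,n)}$ with $\beta(d,n)=\log\sigma(2)^{-1}/\log\brackets{1+\frac{v_d}{\log_{1+v_d}n}d}$. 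Choosing $n=n_d$ polynomial in $d$ (say $n_d=\lceil d^{r}\rceil$ with $r>p$, which lies in the admissible range $n\le (1+v_d)^d$ for large $d$ since $2^d$ grows faster than any polynomial), the denominator $\log\brackets{1+\frac{v_d}{\log_{1+v_d}n_d}d}$ is bounded above by $C'\log d$ uniformly in $d$ — here I may need the crude replacement $v_d\ge 1$ and $\log_{1+v_d}n_d\le \log_2 n_d$ to get a clean bound independent of the unknown $v_d$. Hence $\beta(d,n_d)\le C'\log\sigma(2)^{-1}/\log d$, so $a_{n_d+1}(T_d^d)\ge \sigma(2)\cdot n_d^{-C'\log\sigma(2)^{-1}/\log d}=\sigma(2)\cdot\exp\brackets{-C'r\log\sigma(2)^{-1}}=\sigma(2)^{1+C'r}$. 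On the other hand, polynomial tractability forces $a_{n_d+1}(T_d^d)=e(n_d,d)\to 0$ — more precisely, feeding $n_d\ge C\varepsilon^{-q}d^p$ backwards, $a_{n_d+1}(T_d^d)\le \brackets{Cd^p/n_d}^{1/q}\le C''d^{-(r-p)/q}$. Combining, $\sigma(2)^{1+C'r}\le C''d^{-(r-p)/q}$, i.e. $a_2(T_d)=\sigma(2)\le \brackets{C''}^{1/(1+C'r)}d^{-(r-p)/(q(1+C'r))}$, which is polynomial decay in $d$.

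For "$a_2(T_d)$ decays polynomially in $d$ $\Rightarrow$ strong polynomial tractability": suppose $a_2(T_d)\le A\,d^{-\kappa}$ for some $A,\kappa>0$. Since $a_n(T_1)$ decays polynomially and $a_n(T_d)\le a_n(T_1)$ by the monotonicity assumption, there are $C_0,s>0$ with $a_n(T_d)\le C_0 n^{-s}$ for all $n\ge 2$ and all $d$. Now apply the \emph{upper} bound of Theorem~\ref{preasymptoticstheorem}(i) (equivalently Theorem~\ref{preasymptoticstheorem1}), which after the normalization $\sigma(1)=1$ reads $\tau(n)\le \brackets{\tilde C(\delta)/n}^{\alpha(d,\delta)}$ with $\tilde C(\delta)=\exp\brackets{C_0^{(1+\delta)/s}/\delta}$ independent of $d$, and $\alpha(d,\delta)=\log\sigma(2)^{-1}/\log\brackets{\sigma(2)^{-(1+\delta)/s}d}$. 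Fix $\delta=1$. Using $\sigma(2)^{-1}\ge A^{-1}d^{\kappa}$, the numerator $\log\sigma(2)^{-1}\ge \kappa\log d-\log A$, while the denominator $\log\sigma(2)^{-1/s}$ appearing in $\alpha$ together with $\log d$ gives $\log\brackets{\sigma(2)^{-2/s}d}\le (2\kappa/s+1)\log d+2\log A^{-1}/s$; hence for $d$ large, $\alpha(d,1)\ge \alpha_* := \kappa/(2\kappa/s+2) > 0$, a positive constant independent of $d$ (for the finitely many small $d$ one adjusts constants). Therefore $a_{n+1}(T_d^d)=\tau(n)\le \brackets{\tilde C/n}^{\alpha_*}$ for all $n$ and $d$. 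To get $a_{n+1}(T_d^d)<\varepsilon$ it suffices that $n\ge \tilde C\,\varepsilon^{-1/\alpha_*}$, so $n(\varepsilon,d)\le \lceil \tilde C\varepsilon^{-1/\alpha_*}\rceil$, independent of $d$ — strong polynomial tractability with exponent $q=1/\alpha_*$ and $p=0$.

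The main obstacle is the handling of the multiplicity parameter $v_d=\#\set{n\ge 2\mid a_n(T_d)=a_2(T_d)}$ in the lower-bound direction: $v_d$ is not controlled by the hypotheses and could even be infinite in degenerate cases, so I must either invoke the remark after Theorem~\ref{preasymptoticstheorem} that the lower bound remains valid with $v$ replaced by $1$ (at the cost of a worse but still usable $\beta$), or argue separately that an infinite $v_d$ makes $a_2(T_d)=a_2(T_d^d)$ and trivializes the statement. A secondary nuisance is making the "for large $d$" passages rigorous, i.e. absorbing the finitely many exceptional dimensions into the tractability constants, and checking that the chosen $n_d$ stays within the range $2\le n_d\le (1+v_d)^d$ where Theorem~\ref{preasymptoticstheorem}(ii) applies; both are routine once the structure above is in place.
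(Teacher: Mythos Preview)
Your proposal is correct and follows essentially the same route as the paper: both directions rest on Theorem~\ref{preasymptoticstheorem}, using part~(ii) with $n\approx d^r$ to extract polynomial decay of $a_2(T_d)$ from polynomial tractability, and part~(i) with $\delta=1$ together with the uniform bound $a_n(T_d)\le a_n(T_1)\le C n^{-s}$ to produce a $d$-independent exponent for strong polynomial tractability.

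One small slip to fix: in the third implication you bound the denominator of $\alpha(d,1)$ \emph{from above} using only $\sigma(2)^{-1}\ge A^{-1}d^\kappa$, but that inequality gives no upper bound on $\sigma(2)^{-2/s}d$. The clean way (and what the paper does) is to observe that $\alpha(d,1)=x/\bigl((2/s)x+\log d\bigr)$ is increasing in $x=\log\sigma(2)^{-1}$, so substituting the lower bound $x\ge \kappa\log d-\log A$ directly yields $\alpha(d,1)\ge \kappa/(1+2\kappa/s)$ for large $d$, which is even a bit better than your $\alpha_*$. Also, a minor indexing point: in the paper's convention $a_n(T_d^d)=\tau(n)$, not $\tau(n-1)$; and in part~2 you want the denominator of $\beta$ bounded \emph{below} by a multiple of $\log d$, not above. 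Neither of these affects the argument.
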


\begin{proof}
 Clearly, strong polynomial tractability implies polynomial tractability.
 
 Let $\set{T_d^d}$ be polynomially tractable and choose
 nonnegative numbers $C,p$ and $q$ such that
 \begin{equation}
  n(\varepsilon,d)=\#\set{n\in\IN \mid a_n(T_d^d) \geq \varepsilon} \leq C\, \varepsilon^{-q}\, d^p
 \end{equation}
 for all $\varepsilon>0$ and $d\in\IN$. In particular, there is an $r\in\IN$ with
 \begin{equation}
  n\brackets{d^{-1},d} \leq d^{r} -1
 \end{equation}
 for every $d\geq 2$.
 If $d$ is large enough, we can apply Part~$(ii)$ of Theorem~\ref{preasymptoticstheorem} for $n=d^r$ and the estimate
 \begin{equation}
  \beta\brackets{d,d^r} = \frac{\log a_2(T_d)^{-1}}{\log \brackets{1+\frac{v\cdot d}{r \log_{1+v} d}}}
  \leq \frac{2 \log a_2(T_d)^{-1}}{\log d}
 \end{equation}
 to obtain
 \begin{equation}
  d^{-1}
  > a_{d^r}(T_d^d)
  \geq a_2(T_d) \cdot d^{-r \beta\brackets{d,d^r}}
  \geq a_2(T_d)^{2r +1}.
 \end{equation}
 Consequently, $a_2(T_d)$ decays polynomially in $d$.
 
 Now let $a_2(T_d)$ be of polynomial decay.
 Then there are constants $p>0$ and $d_0\in\IN$ such that $a_2(T_d)$ is bounded above by $d^{-p}$ for any $d\geq d_0$.
 On the other hand, there are positive constants $C$ and $s$ such that
 \begin{equation}
  a_n(T_d) \leq a_n(T_1) \leq C \, n^{-s}.
 \end{equation}
 We apply Part~$(i)$ of Theorem~\ref{preasymptoticstheorem} and the estimate
 \begin{equation}
  \alpha\brackets{d,1}= \frac{\log a_2(T_d)^{-1}}{\log d + \frac{2}{s} \log a_2(T_d)^{-1}}
  \geq \frac{p}{1+\frac{2p}{s}} = r >0
 \end{equation}
 to obtain
 \begin{equation}
  a_n(T_d^d) \leq \brackets{\frac{\exp\brackets{C^{2/s}}}{n}}^r
 \end{equation}
 for any $n\in\IN$ and $d\geq d_0$. Consequently,
 \begin{equation}
  n(\varepsilon,d)=\#\set{n\in\IN \mid a_n(T_d^d) \geq \varepsilon}
  \leq \exp\brackets{C^{2/s}}\cdot \varepsilon^{-1/r}
 \end{equation}
 for any $d\geq d_0$ and $\varepsilon>0$ and $\set{T_d^d}$ is strongly polynomially tractable.
\end{proof}

Let us consider the spaces $H^{s}_{\rm mix}\brackets{\mathbb{I}^d}$ and $H^{s}_{\rm mix}\brackets{\mathbb{T}^d}$
as defined in Section~\ref{nonsobsection}.
The $L_2$-approximation in these spaces is not polynomially tractable.
Can we achieve polynomial tractability by increasing the smoothness with the dimension?

\begin{cor}
\label{tractabilitycorollary}
 The problem $\set{H^{s_d}_{\rm mix}\brackets{\mathbb{I}^d}\hookrightarrow L_2\brackets{\mathbb{I}^d}}$ is not polynomially tractable
 for any choice of natural numbers $s_d$.
 The problem $\set{H^{s_d}_{\rm mix}\brackets{\mathbb{T}^d}\hookrightarrow L_2\brackets{\mathbb{T}^d}}$ is strongly polynomially tractable,
 iff it is polynomially tractable,
 iff $b-a<2\pi$ and $s_d$ grows at least logarithmically in $d$
 or $b-a=2\pi$ and $s_d$ grows at least polynomially in $d$.
\end{cor}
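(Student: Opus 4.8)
The plan is to reduce both statements to the equivalence ``$\{T^d\}$ is strongly polynomially tractable $\iff$ it is polynomially tractable $\iff$ $a_2(T)$ decays polynomially in $d$'', applied to the univariate embeddings $T_d^{\mathbb{I}}\colon H^{s_d}(\mathbb{I})\hookrightarrow L_2(\mathbb{I})$ and $T_d^{\mathbb{T}}\colon H^{s_d}(\mathbb{T})\hookrightarrow L_2(\mathbb{T})$. First I record the structural facts from Section~\ref{nonsobsection}: the embeddings $H^{s_d}_{\rm mix}(\mathbb{I}^d)\hookrightarrow L_2(\mathbb{I}^d)$ and $H^{s_d}_{\rm mix}(\mathbb{T}^d)\hookrightarrow L_2(\mathbb{T}^d)$ are the $d$th tensor powers of $T_d^{\mathbb{I}}$ and $T_d^{\mathbb{T}}$; each $T_d^{\Omega}$ has norm one; and for $s_d\geq 1$ it is compact with singular value sequence $\sigma$ satisfying $1=\sigma(1)>\sigma(2)>0$. (If some $s_d=0$ the embedding is the non-compact identity, whose error never drops below $1$, so that problem is trivially not polynomially tractable; hence we may assume $s_d\geq 1$.) The quantities that will govern everything are $a_2(T_d^{\mathbb{I}})=\sigma^{(s_d)}(2)$ and $a_2(T_d^{\mathbb{T}})=\tilde\sigma^{(s_d)}(2)=\bigl(\sum_{l=0}^{s_d}\eta^{2l}\bigr)^{-1/2}$, where $\eta=2\pi/(b-a)$.

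Before invoking Theorem~\ref{tractabilitytheorem} one must deal with the fact that $a_n(T_d^{\Omega})$ need not be nonincreasing in $d$ when the $s_d$ are not monotone. I would observe that its proof adapts. The direction ``polynomially tractable $\Rightarrow$ $a_2(T_d)$ decays polynomially in $d$'' uses only Part~$(ii)$ of Theorem~\ref{preasymptoticstheorem} for the fixed sequence $\sigma$ of $T_d^{\Omega}$ (for which $1=\sigma(1)>\sigma(2)>0$) together with $d^r\leq 2^d$ for large $d$, and nowhere uses monotonicity in $d$. The converse direction uses a polynomial upper bound $\sigma(n)\leq C\,n^{-s}$ that is uniform in $d$; here this is obtained cheaply by discarding all terms with $l\geq 2$ in the defining sums and using $s_d\geq 1$, giving $\tilde\sigma^{(s_d)}(n)\leq\bigl(1+(2\pi\lfloor n/2\rfloor/(b-a))^2\bigr)^{-1/2}\leq C(b-a)\,n^{-1}$ for all $n\geq 2$ and all $d$ (and $\sigma^{(s_d)}(n)\leq\sigma^{(1)}(n)\leq C'(b-a)\,n^{-1}$ on the cube), which takes over the role played by ``$a_n(T_d)\leq a_n(T_1)$'' in the original argument. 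So for each of our two families, $\{T^d\}$ is (strongly) polynomially tractable iff $a_2(T_d)$ decays polynomially in $d$.

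It then remains to decide, in each case, whether $a_2(T_d)$ decays polynomially in $d$. On the cube, $a_2(T_d^{\mathbb{I}})=\sigma^{(s_d)}(2)$ does not: since $\sigma^{(s)}(2)$ is nonincreasing in $s$ and, by the estimate recorded in Section~\ref{nonsobsection}, $\sigma^{(\infty)}(2)\geq\sqrt{(b-a)^2/(12+(b-a)^2)}>0$, we have $\sigma^{(s_d)}(2)\geq\sigma^{(\infty)}(2)>0$ uniformly in $d$, so it cannot decay polynomially for any choice of the $s_d$ --- hence $\{H^{s_d}_{\rm mix}(\mathbb{I}^d)\hookrightarrow L_2(\mathbb{I}^d)\}$ is never polynomially tractable. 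On the torus everything reduces to the behaviour of $\log\tilde\sigma^{(s_d)}(2)^{-1}=\tfrac12\log\sum_{l=0}^{s_d}\eta^{2l}$, via a case distinction on $\eta$. If $b-a<2\pi$ (so $\eta>1$), summing the geometric series gives $\log\tilde\sigma^{(s_d)}(2)^{-1}\asymp s_d$ with constants depending only on $b-a$, so $\tilde\sigma^{(s_d)}(2)$ decays polynomially in $d$ iff $s_d\succcurlyeq\log d$. If $b-a=2\pi$ (so $\eta=1$), the sum equals $s_d+1$ and $\log\tilde\sigma^{(s_d)}(2)^{-1}=\tfrac12\log(s_d+1)$, so polynomial decay in $d$ is equivalent to $s_d$ growing at least polynomially in $d$. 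If $b-a>2\pi$ (so $\eta<1$), the sum stays below $(1-\eta^2)^{-1}$, whence $\tilde\sigma^{(s_d)}(2)\geq\sqrt{1-\eta^2}>0$ and polynomial decay is impossible; this is consistent with the stated ``iff'' because neither listed condition can hold in this regime. Combining these with the equivalence from the previous paragraph yields the claimed characterization.

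The main obstacle is not any single hard estimate but the bookkeeping in the second paragraph: because the $s_d$ need not be monotone, Theorem~\ref{tractabilitytheorem} cannot be quoted verbatim, and one has to check that its proof only ever uses $\sigma(1)=1$, $\sigma(2)\in(0,1)$, and a polynomial bound on $\sigma$ that is uniform in $d$ --- all of which hold here because $s_d\geq 1$. The only genuinely external ingredient is the lower bound $\sigma^{(\infty)}(2)>0$ on the cube; this single inequality is what separates the cube (never tractable) from the torus (tractable whenever $b-a\leq 2\pi$ and the smoothness grows fast enough).
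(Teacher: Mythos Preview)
Your proposal is correct and follows the paper's intended route: deduce the corollary from Theorem~\ref{tractabilitytheorem} together with the explicit values $\tilde\sigma^{(s)}(2)=\bigl(\sum_{l=0}^{s}\eta^{2l}\bigr)^{-1/2}$ and the lower bound $\sigma^{(s)}(2)\geq\sigma^{(\infty)}(2)>0$ from Section~\ref{nonsobsection}. You are in fact more careful than the paper, which states the corollary without addressing the monotonicity hypothesis of Theorem~\ref{tractabilitytheorem}; your observation that this hypothesis is used only to secure a uniform-in-$d$ polynomial bound $a_n(T_d)\leq C\,n^{-s}$, and that here such a bound follows directly from $s_d\geq 1$, closes that gap cleanly.
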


With regard to tractability, the $L_2$-approximation of mixed order Sobolev functions
is hence much harder for nonperiodic than for periodic functions.
The negative tractability result for nonperiodic functions can be explained by the difficulty of approximating $d$-variate
polynomials with degree one or less in each variable and $H^1_{\rm mix}$-norm less than one.
The corresponding set of functions is contained in the unit ball
of the nonperiodic space $H^s_{\rm mix}$ for every $s\in\IN\cup\set{\infty}$.

Note that Corollary~\ref{tractabilitycorollary} for cubes of unit length is in accordance with \cite{pw},
where Papageorgiou and Woźniakowski prove the corresponding statement for the $L_2$-approximation in Sobolev spaces
of mixed smoothness $(s_1,\hdots,s_d)$ on the unit cube.
The smoothness of such functions increases from variable to variable,
but the smoothness with respect to a fixed variable does not increase with the dimension.
There, the authors raise the question for a characterization of spaces and their norms
for which increasing smoothness yields polynomial tractability.
Theorem~\ref{tractabilitytheorem} says that in the setting of uniformly increasing mixed smoothness,
polynomial tractability is achieved,
if and only if it leads to a polynomial decay of the second singular value of the univariate problem.
It would be interesting to verify whether the same holds in the case of variable-wise increasing smoothness
and to compute the exponents of strong polynomial tractability.

The reason for the great sensibility of the tractability results for the periodic spaces to the length of the interval
can be seen in the difficulty of approximating
trigonometric polynomials with frequencies in $\frac{2\pi}{b-a}\set{-1,0,1}^d$
that are contained in the unit ball of $H^\infty_{\rm mix}\brackets{\mathbb{T}^d}$.
The corresponding set of functions is nontrivial,
if and only if $\frac{2\pi}{b-a}$ is smaller than one.

It may yet seem unnatural that the approximation numbers are so
sensible to the representation $[\boldsymbol a,\boldsymbol b]$ of the $d$-torus or the $d$-cube.
This can only happen, since the above and common scalar products
\begin{equation}
 \scalar{f}{g}=\sum_{\alpha\in\set{0,\hdots,s}^d} \scalar{\diff^\alpha f}{\diff^\alpha g}_{L_2}
\end{equation}
do not define a homogeneous family of norms on $H^{s}_{\rm mix}\brackets{[\boldsymbol a,\boldsymbol b]}$.
To see that, let $T$ be the embedding of $H^{s}_{\rm mix}\brackets{[\boldsymbol a,\boldsymbol b]}$
into $L_2\brackets{[\boldsymbol a,\boldsymbol b]}$
and let $T_0$ be the embedding in the case $[\boldsymbol a,\boldsymbol b]=[0,1]^d$.
The dilation operation $Mf=f\brackets{\boldsymbol a+(\boldsymbol b-\boldsymbol a) \,\cdot}$ defines a
linear homeomorphism both
from $L_2\brackets{[\boldsymbol a,\boldsymbol b]}$ into $L_2\brackets{[0,1]^d}$
and from $H^{s}_{\rm mix}\brackets{[\boldsymbol a,\boldsymbol b]}$ into $H^{s}_{\rm mix}\brackets{[0,1]^d}$
and 
\begin{equation}
 T_0 = M T M^{-1}
.\end{equation}
The $L_2$-spaces satisfy the homogeneity relation
\begin{equation}
 \norm{Mf}_{L_2\brackets{[0,1]^d}}
 = \lambda^d\brackets{[\boldsymbol a,\boldsymbol b]} \cdot \norm{f}_{L_2\brackets{[\boldsymbol a,\boldsymbol b]}}
 \quad \text{for} \quad f\in L_2\brackets{[\boldsymbol a,\boldsymbol b]}
.\end{equation}
If the chosen family of norms on $H^{s}_{\rm mix}\brackets{\mathbb{T}^d}$ is also homogeneous, i.e.
\begin{equation}
 \norm{Mf}_{H^{s}_{\rm mix}\brackets{[0,1]^d}}
 = \lambda^d\brackets{[\boldsymbol a,\boldsymbol b]} \cdot \norm{f}_{H^{s}_{\rm mix}\brackets{[\boldsymbol a,\boldsymbol b]}}
 \quad \text{for} \quad f\in H^{s}_{\rm mix}\brackets{[\boldsymbol a,\boldsymbol b]}
,\end{equation}
the approximation numbers of $T$ and $T_0$ clearly must coincide.
The above scalar products do not yield a homogeneous family of norms.
An example of an equivalent and homogeneous family of norms on $H^{s}_{\rm mix}\brackets{[\boldsymbol a,\boldsymbol b]}$
is defined by the scalar products
\begin{equation}
 \scalar{f}{g}=\sum_{\alpha\in\set{0,\hdots,s}^d} (\boldsymbol b - \boldsymbol a)^{2\alpha} \scalar{\diff^\alpha f}{\diff^\alpha g}_{L_2}
.\end{equation}
Hence, the approximation numbers and tractability results with respect to this scalar product
do not depend on $\boldsymbol a$ and $\boldsymbol b$ at all.
They coincide with the approximation numbers with respect to the previous
scalar product on $H^{s}_{\rm mix}\brackets{[0,1]^d}$.


\raggedright

\end{document}